\newcommand{\R}{\mathbb{R}}
\newcommand{\al}{\alpha}
\newcommand{\bt}{\beta}
\newcommand{\ga}{\gamma}
\newcommand{\sech}{\operatorname{sech}}
\newtheorem{thm}{Theorem}[section]
\newtheorem{cor}[thm]{Corollary}
\newtheorem{lemma}[thm]{Lemma}
\newtheorem{prop}[thm]{Proposition}
\newtheorem{defn}[thm]{Definition}
\theoremstyle{remark}
\newtheorem{rem}{Remark}[section]
\newcommand{\be}{\begin{equation}}
\newcommand{\ee}{\end{equation}}
\newcommand{\bp}{\begin{proof}}
\newcommand{\ep}{\end{proof}}
\newcommand{\bel}{\begin{equation}\label}
\newcommand{\eeq}{\end{equation}}
\newcommand{\bea}{\begin{eqnarray}}
\newcommand{\eea}{\end{eqnarray}}
\newcommand{\bee}{\begin{eqnarray*}}
\newcommand{\eee}{\end{eqnarray*}}
\newcommand{\ben}{\begin{enumerate}}
\newcommand{\een}{\end{enumerate}}
\date{\today}
\title[Stability of breathers on the half-line]{STABILITY OF MKDV BREATHERS ON THE HALF-LINE}
\author[M.A. Alejo]{Miguel A. Alejo}
\address{Departamento de Matem\'aticas, Universidad de C\'ordoba, Spain.}
\email{malejo@uco.es}
\author[M. Cavalcante]{M\'arcio Cavalcante}
\address{Instituto de Matem\'atica, Universidade Federal de Alagoas, Macei\'o-Brazil.}
\email{marcio.melo@im.ufal.br}
\author[A.J. Corcho]{ Ad\'an J. Corcho}
\address{Instituto de Matem\'atica, Universidade Federal do Rio de Janeiro,  Brazil.}
\email{adan@im.ufrj.br}
\thanks{Cavalcante was supported by CNPq 310271/2021-5 and CAPES-MATHAMSUD
	88887.368708/2019-00}
\thanks{A.J. Corcho was  supported by  CNPq grant no.~307616/2020-7, Brazil and  Carolina Fundation Grants 2020-2021, Spain.}
\subjclass{Primary 35Q55}
\keywords{modified KdV equation, breather solution, Cauchy Problem, orbital stability, half-line}
\begin{document}

\begin{abstract}

In this paper we study the stability problem for mKdV breathers on the left half-line.  We are able to show that leftwards moving  breathers, 
initially located far away from the origin, are strongly stable for the problem posed on the left half-line, when assuming homogeneous boundary conditions. The
proof involves a Lyapunov functional which is almost conserved by the mKdV flow once we control some boundary terms which
naturally arise.
\end{abstract}

\maketitle
\numberwithin{equation}{section}

\bigskip

\section{Introduction}

\subsection{Setting of the problem} This paper deals with the nonlinear stability of \emph{breathers} of the focusing modified
Korteweg-de Vries (mKdV) equation \cite{wadati} posed on the \emph{left half-line} $\R^{-}:= (-\infty,0)$: 

\begin{equation}\label{mkdv}
\partial_tu+\partial_x(\partial_x^2 u+u^3)=0,\quad u(x,t)\in\R,\quad (x,t)\in\mathbb R^{-}\times(0,T).
\end{equation}

\medskip
The focusing\footnote{Focusing or defocusing means $\pm u^3$ respectively in the equation.} mKdV equation \eqref{mkdv} in the whole real line $\R$,  is an integrable and canonical non-linear dispersive equation, originally describing  
shallow water wave dynamics \cite{Whit}, and therefore appearing as a good approximation of different  physical problems. A few examples are 
the motion of the curvature 
of some geometric fluxes \cite{GoPe,Na1,Na2}, vortex patches, ferromagnetic vortices \cite{Wex},  traffic models, anharmonic lattices, hyperbolic surfaces, among others. As a consequence of its integrability in $\R$, it is possible to get explicit solutions. For instance, the simplest one is the (real-valued) \emph{mKdV soliton} solution which, to be more precise, has the form
\begin{equation}
	u(x,t)=\widetilde{Q}_c(x-ct-x_0),\quad \widetilde{Q}_c(s):=\sqrt{c}\widetilde{Q}(\sqrt{c}s),\quad c>0,~x_0\in\R,
\end{equation}
where
\begin{equation}\label{soliton}
	\widetilde{Q}(s)= \frac{\sqrt{2}}{\cosh(s)}=2\sqrt{2}\partial_s[\arctan(e^s)],
\end{equation}
\noindent
with  $c$  the propagation speed of the wave.  The real-line soliton $\widetilde{Q}_c$ satisfies the following ``boundary value problem'' (BVP) on $\R$,
\begin{equation}\label{edosol}
	\begin{cases}
		\widetilde{Q}_c''-c\widetilde{Q}_c+\widetilde{Q}_c^3=0,& x\in \R,\\
		\lim\limits_{x\to \pm \infty}\widetilde{Q}(x)=0,
	\end{cases}
\end{equation}
and it is the unique positive $H^1(\R)$-solution of \eqref{mkdv} up to translations in space. 

\medskip
A Cauchy theory for the initial value problem (IVP) for the \emph{focusing} mKdV posed on the real axis,
\begin{equation}\label{ivpmkdv}
	\begin{cases}
		\partial_tu+\partial_x(\partial_x^2 u+u^3) =0 ,& (x,t)\in\R\times\R, \medskip \\
		u(x,0)=u_0(x),                                   & x\in\R,
	\end{cases}
\end{equation}
has been extensively studied in the last years. In the case of real-valued initial data, the IVP for \eqref{ivpmkdv}
 is globally well posed
for initial data in $H^s(\R)$ for any $s > 1/4$; see  \cite{KPV} and  \cite{CKSTT}. Moreover, the (real-valued) flow map is not 
uniformly continuous if $s < 1/4$  (see \cite{KPV2}). This was proved by using a special family of solutions of \eqref{ivpmkdv} 
called \emph{breathers}, and discovered by Wadati \cite{wadati}. Explicitly, the mKdV breather is defined as follows. 

\begin{defn}[See e.g. \cite{wadati,La}]\label{defbreather} 
Let $\al, \bt  >0$ and $x_1,x_2\in \R$ be fixed parameters. The focusing mKdV breather is a smooth solution of \eqref{ivpmkdv} given  by the formula

\bea\label{breather}
\begin{split}	
\widetilde B_{\al, \bt}(x,t;x_1,x_2) 
    &:=  2\sqrt{2} \partial_x \bigg[\arctan \Big( \frac{\bt}{\al}\frac{\sin(\al y_1)}{\cosh(\bt y_2}\Big)\bigg]\\
    & =2 \sqrt{2} \bt \sech (y_2) \bigg[\frac{\cos (\al y_1) - (\bt /\al) \sin (\al y_1) \tanh (\bt y_2)} {1 +(\bt/\al)^2 \sin^2 (\al y_1)\sech^2 (\bt y_2)}\bigg],			
\end{split}
\eea
where
\begin{align}
&y_1 := x+ \delta t + x_1, \quad y_2 : = x+ \ga t + x_2\label{y1y2}\\ 
\intertext{and}
&\delta := \al^2 -3\bt^2, \quad  \ga := 3\al^2 -\bt^2.\label{deltagamma}
\end{align}
\end{defn}
\noindent{
Observe that this wave like solution of \eqref{ivpmkdv} is periodic in variable $y_1$ and localized in variable $y_2$. Also, note that  $\gamma\neq\delta$ for any $\alpha, \beta  \neq 0$, which implies  that the 
\textit{traveling wave arguments}\footnote{Assuming the simplest case $x_1=x_2=0$.}:
$$y_1=x+\delta t\quad \text{and}\quad  y_2=x+\gamma t$$
are always different.} Currently, $\beta$ and $\alpha$ are called  {\it amplitude} and {\it frequency} parameters of the breather, 
and $-\gamma$ will be the \emph{velocity} of the mKdV breather solution \eqref{breather}. Note that this 
corresponds to the speed of the {\it$\sech$} envelope of the breather profile, dragging to the left or to the right (depending on its sign) 
the corresponding inner oscillations of the breather. In \cite{AM} it was proved that breather solutions of the focusing mKdV equation \eqref{ivpmkdv} in $\R$ are actually globally stable in a natural $H^2$ topology. In the proof the authors introduced a new Lyapunov functional, 
at the $H^2$ level, which allowed to describe the dynamics of small perturbations, including oscillations induced by the periodicity 
of the solution, as well as a direct control of the corresponding instability modes. In particular, degenerate directions were controlled using low-regularity conservation laws. Finally,  
we point out that in \cite{conjecture} the soliton resolution for the focusing mKdV equation on the real line $\R$, was established for initial conditions in some weighted Sobolev spaces, where one should realize that general solution to the focusing mKdV
will consist of solitons moving to the right, breathers traveling to both directions and a radiation
term.  Moreover, the authors obtained the asymptotic stability of nonlinear structures involving solitons and breathers.

\medskip
Note that (regular) breather solutions only appear in { some particular  PDEs. For instance, in gKdV models,  they only arise in the mKdV \eqref{mkdv} but,  they do not appear in the KdV case}, as it was recently 
proved \cite{MP}. Therefore, proffiting its existence in the mKdV model \eqref{ivpmkdv}, our main aim in this work will be to approach the stability analysis of 
{\it focusing} mKdV breathers in the \emph{left half-line} $\R^{-}$. As a direct consequence, we present two main contributions: firstly, we go a step further, 
in comparison with \cite{CM}, where the stability analysis for simpler solutions, like KdV solitons in the half-line was presented. Secondly, we extend previous 
stability results of mKdV breathers in the real line $\R$ (see \cite{AM}), by adapting these techniques to the case of boundary conditions as it corresponds to a $\R^{-}$ domain, which is more realistic case for experimental purposes.

\medskip
In this work, we consider the mKdV equation on the left half-line and we will deal with mKdV breather solutions \eqref{breather} \emph{moving leftwards} in space, 
and therefore when its velocity $-\gamma<0$ or equivalently, from \eqref{deltagamma}, when $\beta<\sqrt{3}\alpha$. In this situation we can impose two boundary conditions 
for the IBVP  \eqref{IBVP_left}.

\begin{figure}[h]\label{fig-breather-left}
\begin{center}
\includegraphics[scale=0.65]{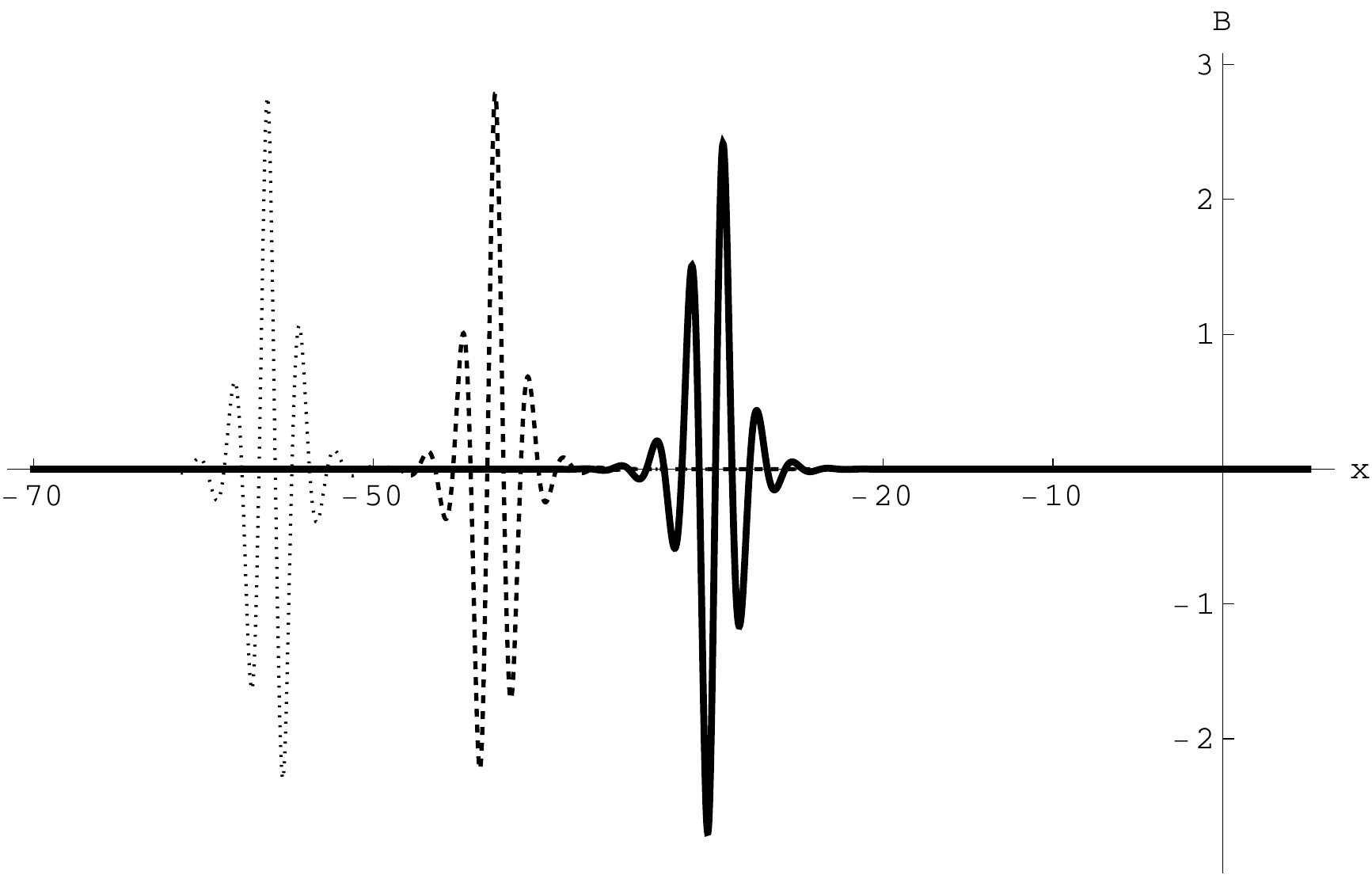}
\end{center}
\caption{The evolution of the mKdV breather \eqref{breather},  with  $\alpha =3,~\beta=1,~x_1=0$ and $x_2=30$ at times $0,~0.5$ and $1$ (full, dashed, dotted lines, respectively).
In this case, $- \ga =\bt^2 -  3\al^2<0$ and hence the breather moves leftwards.}
\end{figure}

\medskip 
It remains as an interesting open problem to study the stability properties of these mKdV breathers on the right hand side $\R^+=(0,+\infty)$. 
In fact, a few differences  with respect to the left hand side arise in that case. For instance, the case of rightwards moving breathers, implies 
that $\beta>\sqrt{3}\alpha$. Unfortunately, in this situation, we can  not  impose a second boundary 
condition of the corresponding IBVP. This fact prevent us from constructing a suitable Lyapunov functional, 
almost conserved and well defined on $H^2(\R^+)$ (See Remark \ref{right}).

\medskip
From another point of view, many physical problems naturally arise as initial boundary value problems (IBVP), 
because of the local character of the corresponding phenomenon \cite{Zabusky}. However, the IBVP for the mKdV 
equation has been considerably less studied than the corresponding IVP \eqref{ivpmkdv}. For example, there are at 
least two interesting IBVP for mKdV still in unbounded domains: the one posed on the right half-line, and a second one 
posed on the left portion of the line, which we consider in this work.

\subsection{Unbounded initial boundary value problems} The IBVP for the \emph{focusing} mKdV equation posed on the {\bf left} half-line 
is the following: for $\mathbb R^-:= (-\infty,0)$ and $T>0$, {we look for solutions $u$ of the model}
\begin{equation}\label{IBVP_left}
\begin{cases}
	\partial_tu+\partial_x(\partial_x^2 u+u^3)=0  ,  & (x,t)\in\mathbb R^-\times(0,T),\\
	u(x,0)=u_0(x),                                   & x\in\mathbb R^-,\\
	u(0,t)=f(t),                                     & t\in(0,T),\\
	\partial_xu(0,t)=f_1(t),                         & t\in(0,T).\\
\end{cases}
\end{equation}

In the recent literature, the mathematical study of IBVP \eqref{IBVP_left} is usually considered in the following setting 
\begin{equation}\label{setting_B}
	(u_0,f,f_1)\in H^{s}(\R^-)\times H^{(s+1)/3}(\R^+)\times H^{s/3}(\R^+).
\end{equation}
These assumptions are in some sense sharp because of the following localized smoothing effect for the linear evolution \cite{KPV}
\begin{align*}
	&\|\psi(t) e^{-t\partial_x^3}\phi(x)\|_{\mathcal{C}\big(\mathbb{R}_x;\; H^{(s+1)/3}(\mathbb{R}_t)\big)}\lesssim \|\phi\|_{H^s(\mathbb{R})},\\
	\intertext{and}
	&\|\psi(t) \partial_xe^{-t\partial_x^3}\phi(x)\|_{\mathcal{C}\big(\mathbb{R}_x;\; H^{s/3}(\mathbb{R}_t)\big)}\lesssim \|\phi\|_{H^s(\mathbb{R})},
\end{align*}
where $\psi(t)$ is a smooth cutoff function and $e^{-t\partial_x^3}$, denoting the linear homogeneous solution group on $\mathbb{R}$ associated
to the linear  part of the  equation in \eqref{IBVP_left}. Therefore, and hereafter, we will  follow the setting \eqref{setting_B}.

\medskip 
Other classical IBVP is the mKdV on the right half-line given by
\begin{equation}\label{IBVP_right}
\begin{cases}
\partial_tu+\partial_x(\partial_x^2 u+u^3)=0  ,  & (x,t)\in\mathbb R^+\times(0,T),\\
u(x,0)=u_0(x),                                   & x\in\mathbb R^+,\\
u(0,t)=f(t),                                     & t\in(0,T).
\end{cases}
\end{equation}
The presence of one boundary condition in  \eqref{IBVP_right} versus two boundary conditions in the left half-line problem 
\eqref{IBVP_left} for the KdV-component of the system is justified in \cite{Holmer}. The local well-posedness was considered in \cite{CK} on the Sobolev Spaces $H^{\frac14}(\R^+)$. 
It was recently shown in \cite{CM} that solitons initially posed  far away from the origin are strongly stable for the problem posed 
on the right half-line, assuming homogeneous boundary conditions. The proof of this stability result involved the construction of two almost conserved 
quantities adapted to the evolution of the KdV soliton, in the particular case of the half-line.

\medskip
With respect to previous advances, Faminskii showed global well-posedness for the following IBVP associated to the classical KdV equation 
(see \cite{Fa4}):
\begin{equation}\label{IBVP_left_classical}
\begin{cases}
\partial_tu+\partial_x(\partial_x^2 u+u^2)=0,  & (x,t)\in\mathbb R^-\times(0,T),\\
u(x,0)=u_0(x),                                 & x\in\mathbb R^-,\\
u(0,t)=f(t),                                   & t\in(0,T),\\
\partial_xu(0,t)=f_1(t),                       & t\in(0,T).
\end{cases}
\end{equation}

In the current work, we consider the solution $u$ posed on the space 

\begin{equation}\label{space-functions}
u\in \mathcal{C}\big(\R^+;H^2(\R^-)\big) \quad \text{and}\quad  \partial_{x}^{j}u\in\mathcal{C}\big(\mathbb{R}_{x}^{+} ; H^{(3-j) / 3}(0, T)\big)~~ \text {for}~~\  j=0,1,2,3.
\end{equation}

\medskip 
\begin{rem}[Well-posedness]\label{rem-wp}
Concerning the well-posedness theory for the IBVP \eqref{IBVP_left} at the level $H^2(\R^-)$, we remark the following:
 
\medskip 
\begin{enumerate}[(a)]
 \item (\textit{Local Theory}). The approach used by Faminskii in \cite{Fa4}  to solve a similar problem by considering the 
 quadratic nonlinearity can be applied to our current problem to get a local theory. In fact, local solutions in $\mathcal{C}([0, T];\, H^2(\R^-))$ 
 for the IBVP \eqref{IBVP_left},  with conditions \eqref{setting_B} at the regularity level $s=2$,  
 can be constructed by using the contraction principle. In such a case, the main difficulty is to get the fundamental 
 trilinear estimate needed to solve \eqref{IBVP_left} on the modified Bourgain spaces adapted to the corresponding problem posed on the half-line. 
 This is a technical argument and it can be obtained by using similar ideas contained in  \cite{CC},  where the modified Kawahara equation with cubic nonlinearities was studied. There, the key point was to obtain the corresponding trilinear estimates 
 (see Theorem 1.1  in \cite{CC}).
  
\medskip
\item (\textit{Global Theory}). Local solutions obtained in (a) can be extended globally in time from apriori 
estimates presented in Section \ref{Section_Fun-Est} (see Corollary \ref{cor-gwp}).
\end{enumerate}
\end{rem}

\subsection{Main result} 
We consider a breather solution on the left half-line as the restriction on $\R^{-}$ of 
classical breathers posed on the whole line 
\eqref{breather}, i.e.
\begin{equation}\label{semibreather}
B_{\alpha,\beta}=\widetilde B_{\alpha,\beta}\bigg|_{\R^{-}}.
\end{equation}

We highlight that the above breather on the left half-line is not an exact solution for the IBVP \eqref{IBVP_left}, except for very particular
boundary conditions $f(t)$ and $f_1(t)$. More precisely, restricted breathers $B=B(x,t;x_1,x_2)$ induce the natural traces given by

\begin{equation}\label{Breathers-lhl-traces}
f(t)=B(x=0,t;x_1,x_2)\quad  \text{and}\quad f_1(t)=\partial_xB(x=0,t;x_1,x_2).  
\end{equation}

\medskip
In this work we will prove that any classical mKdV breather solution,  restricted to the left half-line $\R^-$ \eqref{semibreather}, 
and placed far enough from the origin $x = 0,$ is stable in  $H^2(\R^-)$ under perturbations 
that preserve the zero boundary conditions.  More precisely, 
we prove the following:

\begin{thm}[Nonlinear $H^2$ stability of mKdV breathers on the left half-line]\label{teorema} 
	Let $\alpha, \beta>0$, and $B_{\alpha,\beta}$ a restricted breather \eqref{semibreather}. Assuming that $\beta \leq \alpha$ (breathers moving leftwards), there exist parameters $\eta_{0}, A_{0}$ and $L_0$, depending on $\alpha$ and $\beta$, such that for all  $L>L_0$ and  $\eta \in(0, \eta_0)$ 
	the following holds: consider $u_{0} \in H^{2}(\mathbb{R}^-)$ such that
	\begin{equation}\label{initial}
	\left\|u_{0}-B_{\alpha, \beta}(\cdot, 0 ; 0,L)\right\|_{H^{2}(\mathbb{R}^-)} \leq \eta.
	\end{equation}
	Then there exist  continuous functions $\rho_{1}(t), \rho_{2}(t) \in \mathbb{R}$ such that the solution $u(\cdot, t)$ of the IBVP 
	\eqref{IBVP_left} with initial data $u_{0}$ and homogeneous boundary conditions $f(t)=f_1(t)\equiv 0$, satisfies
	\begin{align}
	&\sup _{t \in \mathbb{R}}\left\|u(t)-B_{\alpha, \beta}\left(\cdot, t ; \rho_{1}(t), \rho_{2}(t) + L\right)\right\|_{H^{2}(\mathbb{R}^-)} \leq A_{0} \eta+Ke^{-\beta L} \label{teorema-a}
	\end{align}
for some constant $K>0$.
\end{thm}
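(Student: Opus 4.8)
The plan is to adapt the $H^2$ Lyapunov strategy of \cite{AM} to the half-line, the essential new feature being the control of boundary contributions at $x=0$. Recall that on $\R$ the breather $\widetilde B_{\al,\bt}$ is a critical point of a conserved functional of the form
\[
\mathcal{H}[u] = F[u] + a(\al,\bt)\,E[u] + b(\al,\bt)\,M[u],
\]
where $M$ and $E$ are the mass and energy and $F$ is the third conserved quantity of the mKdV hierarchy (the one controlling the $H^2$ norm), with $a,b$ explicit so that $\mathcal{H}'(\widetilde B_{\al,\bt})=0$ and the second variation $\mathcal{H}''(\widetilde B_{\al,\bt})$ is positive definite modulo the two symmetry directions (translations in $y_1,y_2$) together with the finitely many scaling directions handled by the lower-order conservation laws. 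First I would write the solution in modulated form
\[
u(x,t) = B_{\al,\bt}\big(x,t;\rho_1(t),\rho_2(t)+L\big) + v(x,t),
\]
and fix $\rho_1,\rho_2$ by the two orthogonality conditions annihilating the translation modes; the implicit function theorem produces these as $C^1$ functions of $t$ while $\norm{v(t)}_{H^2(\R^-)}$ stays small.

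The core is the almost-conservation of the restricted functional $\mathcal{H}_{\R^-}[u]:=\int_{-\infty}^0(\cdots)\,dx$. Differentiating in time and using \eqref{IBVP_left}, every interior term cancels exactly as on the whole line, but integration by parts now leaves terms at $x=0$ that are polynomial in the traces $u(0,t),\partial_x u(0,t),\partial_x^2 u(0,t),\partial_x^3 u(0,t)$, so that
\[
\mathcal{H}_{\R^-}[u(t)] = \mathcal{H}_{\R^-}[u_0] + \int_0^t \mathcal{B}[u(s)]\,ds,
\]
where $\mathcal{B}[u]$ gathers these boundary contributions. Here I would exploit two facts: the homogeneous conditions $f=f_1\equiv 0$ kill $u(0,t)$ and $\partial_x u(0,t)$, while the higher traces $\partial_x^2 u(0,t),\partial_x^3 u(0,t)$ are controlled, and shown integrable in time, by the localized smoothing estimates underlying the well-posedness in the space \eqref{space-functions}. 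Since the breather is centred at $-L$ with $L$ large, $B_{\al,\bt}$ and all its derivatives are $O(e^{-\bt L})$ at the origin, so the breather's own contribution to $\mathcal{B}$ is exponentially small; this is exactly the origin of the $Ke^{-\bt L}$ term in \eqref{teorema-a}.

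To convert almost-conservation into stability I would prove the coercivity
\[
\mathcal{H}_{\R^-}[u] - \mathcal{H}_{\R^-}[B_{\al,\bt}] \ge \mu\,\norm{v}_{H^2(\R^-)}^2 - C\,\big|\,\text{(scaling-mode projections)}\,\big|^2,
\]
valid for $v$ orthogonal to the symmetry directions. The positive part descends from $\mathcal{H}''(\widetilde B_{\al,\bt})$ restricted to $\R^-$, the cutoff producing only $O(e^{-\bt L})$ corrections because the profile sits far from $x=0$; the finitely many non-positive directions are absorbed using the (now only almost-conserved) $M$ and $E$, exactly as in \cite{AM}, with the associated boundary errors again estimated by $\mathcal{B}$. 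The hypothesis $\bt\le\al$ enters precisely here: it forces the $\sech$ envelope to recede from the origin at a definite rate (the velocity $\bt^2-3\al^2\le-2\al^2<0$) and guarantees a favourable sign for the boundary contributions to the second variation, so the restriction to $\R^-$ does not destroy coercivity. Combining the two displays with a continuity (bootstrap) argument in $t$ yields $\norm{v(t)}_{H^2(\R^-)}^2 \lesssim \eta^2 + e^{-\bt L}$ uniformly in $t$, which is \eqref{teorema-a}.

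The main obstacle is the boundary-term control in the second paragraph: one must show not merely that the higher traces $\partial_x^2 u(0,t),\partial_x^3 u(0,t)$ coming from $F$ are finite, but that $\int_0^t\mathcal{B}[u(s)]\,ds$ is genuinely small, of size $O(\eta^2)+O(e^{-\bt L})$. This forces one to marry the smoothing estimates with the exponential localization of the receding breather, and it is the step where the leftward-motion restriction is indispensable.
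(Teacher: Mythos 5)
Your overall architecture (modulated decomposition fixed by two orthogonality conditions, the functional $\mathcal{H}=F+2(\beta^2-\alpha^2)E+(\alpha^2+\beta^2)^2M$ restricted to $\R^-$, coercivity recovered by extending $z$ by zero to the whole line and invoking the spectral result of \cite{AM}, the $e^{-\beta L}$ error from the breather's traces at the origin, and a bootstrap in time) matches the paper. The genuine gap is in the step you yourself flag as the main obstacle: the treatment of the boundary flux $\mathcal{B}$. You propose to bound the higher traces $\partial_x^2u(0,t)$, $\partial_x^3u(0,t)$ by the localized smoothing estimates and to show that $\int_0^t\mathcal{B}[u(s)]\,ds=O(\eta^2)+O(e^{-\beta L})$. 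This cannot work as stated: the smoothing norms control these traces only in $H^{s/3}_t$ on finite time intervals and only in terms of the \emph{full} size of the data (which is $O(1)$, the size of the breather, not $O(\eta)$), they give no smallness and no uniform integrability over $[0,\infty)$, and pointwise-in-$t$ the trace $\partial_x^2u(0,t)$ is not even controlled by $\|u(t)\|_{H^2(\R^-)}$. The paper never estimates these traces at all. Instead it computes the fluxes exactly (Lemma \ref{lemma}): under $u(0,t)=u_x(0,t)=0$ every term of $\tau_M(0,t)$ and $\tau_F(0,t)$ vanishes, while $\tau_E(0,t)=\tfrac12 u_{xx}^2(0,t)\ge 0$ has a definite sign. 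Hence $M$ and $F$ are conserved and $E$ is monotone nondecreasing, and because the coefficient of $E$ in $\mathcal{H}$ is $2(\beta^2-\alpha^2)\le 0$ one gets the one-sided bound $\mathcal{H}[u](t)\le\mathcal{H}[u](0)$, which is all the argument needs.

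This also corrects your attribution of the hypothesis $\beta\le\alpha$. Leftward motion only requires $\beta<\sqrt{3}\,\alpha$; the stronger condition $\beta\le\alpha$ is not about the recession speed of the envelope nor about "a favourable sign for the boundary contributions to the second variation." It is used exactly once, to make the coefficient $2(\beta^2-\alpha^2)$ of the (only non-conserved, monotone) functional $E$ non-positive so that the signed boundary flux pushes $\mathcal{H}$ in the favourable direction. The paper explicitly remarks that for $\alpha<\beta\le\sqrt{3}\,\alpha$ — still leftward-moving breathers — this sign is lost and the stability question remains open; under your reading of the hypothesis that regime would pose no difficulty, which signals that the mechanism you had in mind is not the one that makes the proof close.
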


This result shows that leftwards moving breathers posed initially far away from the origin are strongly stable for
the IVBP  problem \eqref{IBVP_left} posed on the left half-line, assuming homogeneous boundary conditions.

\medskip
Our proof involves an almost conserved Lyapunov functional, for which we have to control some boundary terms. 
In addition, we have some error contributions that appear because the restricted breather \eqref{semibreather} is not 
an exact solution for the initial boundary value problem \eqref{IBVP_left}.

\medskip
\begin{rem} Some points deserve to be enlighted:

\medskip 
\begin{enumerate}[(a)]
\item(\textit{On the zero boundary condition}).
 Note that conditions $u(x=0,t)=u_x(x=0,t)=0$ are assumed to avoid  bad trace 
 higher order functions on the energy identities, which are the fundamental ingredients to 
 construct the almost conserved Lyapunov functional.  The case with non-homogeneous boundary conditions raises 
 as an interesting open  problem.

\medskip 
\item (\textit{Right half-line}).
The case of the IBVP on the right half-line remains as a challenging open problem. This problem imposes 
several new conditions with respect to the left half-line case, as for instance, that the breather speed 
$-\gamma>0$ or that we can not impose a second boundary condition to the corresponding IBVP.

\medskip 
\item (\textit{Applications}). 
 We think that the developed techniques and ideas presented in this work can be applied, with minor changes but with 
 more involved computations, to the Gardner equation posed on the left half-line
 \be\label{GE}
 w_t +(w_{xx} +3\mu w^2+w^3)_x=0, \quad \mu\in\R\backslash\{0\},\qquad w(x,t) \in \R,\quad (x,t)\in \mathbb R^{-}\times(0,T).
 \ee
 This model can be thought as a perturbed {\it focusing} mKdV equation, by a small parameter $\mu\in\R\backslash\{0\}$ 
 controlling the strength of the quadratic nonlinear part or KdV term $w^2$. The Gardner equation \eqref{GE} also bears breather solutions, 
 and they can be interpreted as perturbed mKdV breathers. See \cite{Ale1} for further details. 
\end{enumerate}
\end{rem}

\subsection{Organization of this paper} 
After some preliminaries in Section \ref{Section_2}, we show restricted functionals to the left half-line in Section \ref{Section_Fun-Est}. Afterwards, in Section \ref{ProofTheo} we prove the main Theorem \ref{teorema}. Finally in Appendix \ref{proof-lemma} and \ref{Appboundary} we explicitly prove some technical previous results. 

\subsection{Acknowledgments}
We would like to thank to the Departamento de Matem\'aticas, Universidad de C\'ordoba, Spain, where part of this work was done. Third author also thanks 
\emph{Fundaci\'on Carolina} for its funding support while this work was in preparation.

\section{Preliminaries}\label{Section_2}
In this section we summarize some useful facts obtained in \cite{AM} about  breather profiles on $\R$.

\medskip 
\begin{lemma}\label{equations} The mKdV breather $\widetilde{B}:=\widetilde{B}_{\alpha, \beta}$ \eqref{breather} 
satisfies the following properties:
 
 \medskip 
 \begin{enumerate}[(i)]
 \item  $\widetilde{B}=\frak{B}_{x}$, with $\frak{B}=\frak{B}_{\alpha, \beta}$ given by the smooth $L^{\infty}$-function,
 	\begin{equation}\label{breather-primitive}
 	\frak{B}(x,t):=2 \sqrt{2} \arctan \left(\frac{\beta}{\alpha} \frac{\sin \left(\alpha y_{1}\right)}{\cosh \left(\beta y_{2}\right)}\right).
 	\end{equation}
 	
 \medskip 
 \item For any fixed $t \in \mathbb{R}$, we have $\frak{B}_{t}$ well-defined in the Schwartz class, satisfying
 	\begin{equation}\label{Bt}
 	\widetilde B_{x x}+\frak{B}_{t}+\widetilde B^{3}=0.
 	\end{equation}
 	
 	\medskip 
    \item  For all $t \in \mathbb{R}$, $\widetilde{B}$ satisfies 
    \begin{equation}\label{Bxt}
    \widetilde B_{x t}+2\left(\mathcal{M}_{\alpha, \beta}\right)_{t} \widetilde B=2\left(\beta^{2}-\alpha^{2}\right) \frak{B}_{t}+\left(\alpha^{2}+\beta^{2}\right)^{2} \widetilde B,
    \end{equation}
where
\begin{multline}
	\mathcal{M}_{\alpha, \beta}(x,t):=\frac{1}{2} \int_{-\infty}^{x} \widetilde B_{\alpha, \beta}^{2}\left(s,t ; x_{1}, x_{2}\right) d s\\
	=\frac{2 \beta\left[\alpha^{2}+\beta^{2}+\alpha \beta \sin \left(2 \alpha y_{1}\right)-\beta^{2} \cos \left(2 \alpha y_{1}\right)+\alpha^{2}\left(\sinh \left(2 \beta y_{2}\right)+\cosh \left(2 \beta y_{2}\right)\right)\right]}{\alpha^{2}+\beta^{2}+\alpha^{2} \cosh \left(2 \beta y_{2}\right)-\beta^{2} \cos \left(2 \alpha y_{1}\right)}.
\end{multline}

\medskip 	 
\item Also, for all $t \in \R,$ $\widetilde B$ satisfies the nonlinear stationary equation
\begin{equation}\label{ODEBreather}
G[\widetilde B]:=\widetilde B_{(4 x)}-2\left(\beta^{2}-\alpha^{2}\right)\left(\widetilde B_{x x}+\widetilde B^{3}\right)+\left(\alpha^{2}+\beta^{2}\right)^{2} \widetilde B+5 \widetilde B \widetilde B_{x}^{2}+5 \widetilde B^{2} \widetilde B_{x x}+\frac{3}{2} \widetilde B^{5}=0. 
\end{equation}
\end{enumerate}
\end{lemma}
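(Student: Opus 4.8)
The plan is to take as the single external input the fact that $\widetilde B$ solves the mKdV equation \eqref{ivpmkdv} on all of $\R$, and to deduce (i)--(iv) from this together with the explicit profile \eqref{breather}, organizing the argument so that only one genuinely computational verification is required. Assertion (i) is read off directly from the first line of \eqref{breather}, which already displays $\widetilde B=\partial_x[2\sqrt2\arctan(\cdots)]=\frak{B}_x$ with $\frak{B}$ as in \eqref{breather-primitive}. Here I would note that the argument of the arctangent is smooth because its denominator $\cosh(\beta y_2)\geq 1$ never vanishes, so $\frak{B}\in C^\infty$, while $|\frak{B}|\le 2\sqrt2\cdot\tfrac{\pi}{2}$ gives $\frak{B}\in L^\infty$. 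Moreover, since $\cosh(\beta y_2)\to\infty$ as $x\to\pm\infty$, the argument decays exponentially, so $\frak{B}$ together with all its $x$- and $t$-derivatives lies in the Schwartz class; this is the decay I use repeatedly below, and it also yields the Schwartz regularity of $\frak{B}_t$ claimed in (ii).

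For (ii) I would write the equation in conservation form $\partial_t\widetilde B+\partial_x(\widetilde B_{xx}+\widetilde B^3)=0$ and substitute $\widetilde B=\frak{B}_x$, so that $\partial_t\widetilde B=\partial_x\frak{B}_t$. This gives $\partial_x(\frak{B}_t+\widetilde B_{xx}+\widetilde B^3)=0$, hence $\frak{B}_t+\widetilde B_{xx}+\widetilde B^3$ is a function of $t$ alone; letting $x\to-\infty$ and using the decay from (i) shows this function vanishes, which is exactly \eqref{Bt}.

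The core is the link between (iii) and (iv), which I would route through the auxiliary identity
\[
(\mathcal{M}_{\alpha,\beta})_t=\tfrac12\widetilde B_x^2-\widetilde B\,\widetilde B_{xx}-\tfrac34\widetilde B^4 .
\]
This follows from $(\mathcal{M}_{\alpha,\beta})_t=\int_{-\infty}^x\widetilde B\,\widetilde B_t\,ds$, replacing $\widetilde B_t=-\widetilde B_{xxx}-3\widetilde B^2\widetilde B_x$ from the equation and integrating by parts, the contributions at $-\infty$ vanishing by the decay and the upper endpoint producing the three local terms. Differentiating the equation once in $x$ gives $\widetilde B_{xt}=-\widetilde B_{(4x)}-6\widetilde B\widetilde B_x^2-3\widetilde B^2\widetilde B_{xx}$, and (ii) gives $\frak{B}_t=-(\widetilde B_{xx}+\widetilde B^3)$. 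Substituting these three relations into \eqref{Bxt} and collecting terms, the $(\beta^2-\alpha^2)$- and $(\alpha^2+\beta^2)^2$-contributions match up and \eqref{Bxt} reduces exactly to $G[\widetilde B]=0$; since every manipulation is reversible, (iii) and (iv) are equivalent.

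It remains to anchor this chain with one direct verification on the explicit profile, which I expect to be the main obstacle. I would establish \eqref{ODEBreather} (equivalently \eqref{Bxt}) by inserting \eqref{breather} and \eqref{breather-primitive}; the calculation is elementary but heavy because of the mixed oscillatory--hyperbolic structure. In practice I would cut down the bookkeeping by expressing $\widetilde B$, $\frak{B}$ and $\mathcal{M}_{\alpha,\beta}$ through the single quantity $g:=(\beta/\alpha)\sin(\alpha y_1)/\cosh(\beta y_2)$, whose $x$- and $t$-derivatives obey simple first-order relations, and I would separately confirm the stated closed form of $\mathcal{M}_{\alpha,\beta}=\tfrac12\int_{-\infty}^x\widetilde B^2\,ds$ by checking that its $x$-derivative equals $\tfrac12\widetilde B^2$ and that it tends to $0$ as $x\to-\infty$. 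This is the one point where the explicit integrability of the breather is genuinely used, and it is exactly the computation carried out in \cite{AM}.
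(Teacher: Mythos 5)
The paper does not actually prove Lemma \ref{equations}: Section \ref{Section_2} imports it verbatim from \cite{AM}, so there is no internal proof to compare against. Your logical skeleton is correct and is the same route as \cite{AM}. Item (i) is indeed read off from the first line of \eqref{breather}; your derivation of (ii) by writing \eqref{ivpmkdv} in divergence form, substituting $\widetilde B=\frak{B}_x$, and evaluating the resulting $x$-antiderivative at $x=-\infty$ is clean; your auxiliary identity $(\mathcal M_{\alpha,\beta})_t=\tfrac12\widetilde B_x^2-\widetilde B\,\widetilde B_{xx}-\tfrac34\widetilde B^4$ is correct (it is the same integration by parts that produces $\tau_M$ in \eqref{trace-terms-M}); and substituting it together with $\widetilde B_{xt}=-\widetilde B_{(4x)}-6\widetilde B\widetilde B_x^2-3\widetilde B^2\widetilde B_{xx}$ and $\frak{B}_t=-(\widetilde B_{xx}+\widetilde B^3)$ into \eqref{Bxt} does reduce it exactly to $G[\widetilde B]=0$, so (iii) and (iv) are equivalent modulo (i)--(ii). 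Your side remark that $\frak{B}$ is itself Schwartz (not merely $L^\infty$) is true, since $\arctan(h)\sim h$ for $h$ exponentially small, and is harmless.

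The gap is that the chain is never anchored: everything above establishes only (iii) $\Leftrightarrow$ (iv), while the substantive content of the lemma is that one of them actually holds, together with the displayed closed form of $\mathcal M_{\alpha,\beta}$. Both of these you defer to an ``elementary but heavy'' computation that is described but not performed. Since \eqref{ODEBreather} is precisely the identity on which the paper's Lyapunov argument rests (it is what cancels the terms linear in $z$ in Lemma \ref{pertubation}), a proof that stops before verifying it has not proved the lemma. To close it you must either carry out the substitution of \eqref{breather} into \eqref{ODEBreather} --- your reduction to the single quantity $g=(\beta/\alpha)\sin(\alpha y_1)/\cosh(\beta y_2)$, with $\widetilde B=2\sqrt2\,g_x/(1+g^2)$, is a reasonable way to organize it --- together with the check that the stated rational--trigonometric expression for $\mathcal M_{\alpha,\beta}$ has $x$-derivative $\tfrac12\widetilde B^2$ and vanishes as $x\to-\infty$, or else explicitly cite \cite{AM} for these verifications, as the paper itself does.
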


\medskip 
Another important ingredient  defined in \cite{AM} is the  fourth order linear operator
\begin{multline}\label{operator-L}
\mathcal{L}[z](x ; t):= z_{(4 x)}(x)-2\left(\beta^{2}-\alpha^{2}\right) z_{x x}(x)+\left(\alpha^{2}+\beta^{2}\right)^{2} z(x)+5 \widetilde{B}^{2} z_{x x}(x)+10 \widetilde B \widetilde B_{x} z_{x}(x) \\
+\Big[5 \widetilde B_{x}^{2}+10 \widetilde B \widetilde{B}_{x x}+\frac{15}{2} \widetilde{B}^{4}-6\left(\beta^{2}-\alpha^{2}\right) \widetilde{B}^{2}\Big] z(x),
\end{multline}

\medskip 
\noindent
and its associated quadratic form:
\begin{equation}\label{quadratic}
\begin{aligned}
\widetilde{\mathcal{Q}}[z]&:=\int_{\mathbb{R}} z \mathcal{L}[z]\\
& = \int_{\mathbb{R}} z_{x x}^{2}+2\left(\beta^{2}-\alpha^{2}\right) \int_{\mathbb{R}} z_{x}^{2}+\left(\alpha^{2}+\beta^{2}\right)^{2} \int_{\mathbb{R}} z^{2}-5 \int_{\mathbb{R}} B^{2} z_{x}^{2}\\
&\hspace{2cm} + 5 \int_{\mathbb{R}} B_{x}^{2} z^{2}+10 \int_{\mathbb{R}} B B_{x x} z^{2}+\frac{15}{2} \int_{\mathbb{R}} B^{4} z^{2}-6\left(\beta^{2}
-\alpha^{2}\right) \int_{\mathbb{R}} B^{2} z^{2}.
\end{aligned}
\end{equation}

\medskip 
Now we introduce two important directions associated to spatial translations. Let $\widetilde{B}_{\alpha, \beta}$ as in \eqref{breather}. We define
\begin{equation}\label{B1-B2-tilde}
\widetilde B_{1}\left(x,t ; x_{1}, x_{2}\right):=\partial_{x_{1}} \widetilde{B}_{\alpha, \beta}\left(x,t ; x_{1}, x_{2}\right) \quad \text{and} 
\quad  \widetilde B_{2}\left(x,t ; x_{1}, x_{2}\right):=\partial_{x_{2}} \widetilde{B}_{\alpha, \beta}\left(x,t ; x_{1}, x_{2}\right).
\end{equation}
It is clear that, for all $t \in \mathbb{R},$ $\alpha, \beta>0$ and $x_{1}, x_{2} \in \mathbb{R},$ both $\widetilde B_{1}$ and $ \widetilde B_{2}$ are real-valued functions in the Schwartz class, exponentially decreasing in space. Moreover, it is not difficult to see that they are linearly independent as functions of the $x$ -variable, for all time $t$ fixed.

\medskip
\noindent
The following result in \cite{AM} will be useful:
\begin{prop}\label{coec}
 Let $\widetilde B=\widetilde{B}_{\alpha, \beta}$ be any $m K d V$ breather, and $\widetilde B_{1}, \widetilde B_{2}$ the corresponding kernel of the associated operator $\mathcal{L}$. There exists $\mu_{0}>0$, depending only on $\alpha, \beta$, such that, for any $z \in H^{2}(\mathbb{R})$ satisfying
	$$
	\int_{\mathbb{R}} \widetilde B_{1} z=\int_{\mathbb{R}} \widetilde B_{2} z=0
	$$
	one has
	$$
	{\widetilde{\mathcal{Q}}[z]} \geq \mu_{0}\|z\|_{H^{2}(\mathbb{R})}^{2}-\frac{1}{\mu_{0}}\left(\int_{\mathbb{R}} z \widetilde B\right)^{2}.
	$$
\end{prop}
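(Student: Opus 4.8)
The plan is to establish the coercivity estimate by combining the spectral structure of the operator $\mathcal{L}$ with its scaling and modulation symmetries. First I would analyze the kernel of $\mathcal{L}$. Since $\widetilde{B}_1 = \partial_{x_1}\widetilde{B}$ and $\widetilde{B}_2 = \partial_{x_2}\widetilde{B}$ arise from spatial translation of the breather profile, differentiating the stationary equation $G[\widetilde B]=0$ from \eqref{ODEBreather} with respect to $x_1$ and $x_2$ should show that both lie in the kernel of $\mathcal{L}$, that is $\mathcal{L}[\widetilde B_1]=\mathcal{L}[\widetilde B_2]=0$. These are the two degenerate directions to be removed by the orthogonality constraints $\int_{\mathbb{R}}\widetilde B_1 z = \int_{\mathbb{R}}\widetilde B_2 z = 0$. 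The remaining defect, the term $-\frac{1}{\mu_0}\big(\int_{\mathbb{R}} z\widetilde B\big)^2$, signals a single unstable or neutral direction associated to scaling (variation of $\alpha,\beta$), which cannot be eliminated by orthogonality and must instead be absorbed into the negative correction term on the right-hand side.

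The core analytic step is a spectral decomposition of the self-adjoint operator $\mathcal{L}$ on $L^2(\mathbb{R})$. Because the potential terms in \eqref{operator-L} are built from $\widetilde B$ and its derivatives, which decay exponentially in $y_2$, the essential spectrum of $\mathcal{L}$ coincides with that of the constant-coefficient operator $\partial_x^4 - 2(\beta^2-\alpha^2)\partial_x^2 + (\alpha^2+\beta^2)^2$, whose symbol $\xi^4 + 2(\beta^2-\alpha^2)\xi^2 + (\alpha^2+\beta^2)^2 = (\xi^2+\alpha^2-\beta^2)^2 + 4\alpha^2\beta^2$ is bounded below by $4\alpha^2\beta^2 > 0$. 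Hence the essential spectrum sits strictly above zero, and only finitely many eigenvalues can fall below this gap. I would argue that $\mathcal{L}$ has exactly one simple negative eigenvalue and a two-dimensional kernel spanned by $\widetilde B_1, \widetilde B_2$, with the rest of the spectrum bounded away from zero; this eigenvalue count is precisely what is needed. On the subspace orthogonal to the kernel, the quadratic form $\widetilde{\mathcal Q}[z] = \int_{\mathbb R} z\,\mathcal L[z]$ is positive except along the single negative eigendirection $\chi$, so a standard argument gives $\widetilde{\mathcal Q}[z]\geq \mu_1\|z\|_{L^2}^2 - C\big(\int z\chi\big)^2$ on that subspace.

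To upgrade this $L^2$-coercivity to the $H^2$ norm claimed in the statement, I would use that the principal part $\int z_{xx}^2 + 2(\beta^2-\alpha^2)\int z_x^2 + (\alpha^2+\beta^2)^2\int z^2$ already controls $\|z\|_{H^2}^2$ up to the lower-order potential terms, which are relatively compact perturbations; combining this elliptic gain with the $L^2$ gap yields $\widetilde{\mathcal Q}[z]\geq \mu_0\|z\|_{H^2}^2 - C(\int z\chi)^2$. The final task is to replace the abstract eigendirection $\chi$ by the explicit profile $\widetilde B$. For this I expect that $\widetilde B$ is not orthogonal to $\chi$, so that on the constrained subspace the two quadratic penalties are comparable, and $\big(\int z\chi\big)^2$ can be bounded by $C\big(\int z\widetilde B\big)^2$ plus a small multiple of $\|z\|_{H^2}^2$ to be absorbed; adjusting $\mu_0$ then gives the stated inequality. \textbf{The main obstacle} I anticipate is the rigorous eigenvalue count, establishing that $\mathcal{L}$ has exactly one negative eigenvalue and that its kernel is \emph{precisely} $\spawn\{\widetilde B_1,\widetilde B_2\}$ with no hidden zero modes; this is delicate because $\mathcal{L}$ is a non-standard fourth-order, time-dependent (through the periodic dependence on $y_1$) operator rather than a classical Schrödinger operator, so the usual Sturm oscillation theory does not apply directly and one must rely on the integrable structure or on the explicit computations of \cite{AM} to pin down the spectrum.
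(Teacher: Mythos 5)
This proposition is not proved in the manuscript at all: it is imported verbatim from \cite{AM}, so there is no in-paper argument to compare against, and any proof must ultimately reproduce the spectral analysis carried out there. Your outline has the correct skeleton — $\widetilde B_1,\widetilde B_2\in\ker\mathcal L$ by differentiating $G[\widetilde B]=0$ in $x_1,x_2$; the essential spectrum of $\mathcal L$ starts at $\inf_\xi\big[(\xi^2+\beta^2-\alpha^2)^2+4\alpha^2\beta^2\big]>0$ (note the sign slip: the completed square is $(\xi^2+\beta^2-\alpha^2)^2$, not $(\xi^2+\alpha^2-\beta^2)^2$, though the positive lower bound survives); and coercivity on the constrained subspace then follows from a spectral gap once the negative and null subspaces are identified. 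But the step you yourself flag as "the main obstacle" — that $\mathcal L$ has \emph{exactly one} negative eigenvalue and that its $L^2$-kernel is \emph{exactly} $\spawn\{\widetilde B_1,\widetilde B_2\}$ — is the entire mathematical content of the proposition, and your proposal offers no mechanism to establish it. For a fourth-order operator there is no Sturm oscillation theory, no nodal count, and no general principle forcing $n(\mathcal L)=1$; in \cite{AM} this is obtained only by exploiting the integrable structure: one constructs an explicit fundamental system for the fourth-order ODE $\mathcal L z=0$ (ruling out hidden decaying zero modes), and one computes $\mathcal L[\partial_\alpha\widetilde B]$, $\mathcal L[\partial_\beta\widetilde B]$ in closed form via the identities of Lemma \ref{equations}, reducing the inertia count and the non-degeneracy to the signs of an explicit Gram matrix of breather integrals. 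Without those computations the argument is a conditional outline, not a proof.

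Two further points need care even granting the spectral count. First, the variation of $(\alpha,\beta)$ produces \emph{two} candidate directions, not one; the reason a single penalty $\big(\int_{\R} z\widetilde B\big)^2$ suffices is an outcome of the explicit computation (the relevant negative direction has nonzero projection on $\widetilde B$, while the other parameter direction does not generate negativity), not something that can be asserted a priori. Second, your final substitution of the abstract eigendirection $\chi$ by $\widetilde B$ requires the quantitative non-degeneracy $\int_{\R}\chi\widetilde B\neq 0$, which again is only accessible through the explicit formulas. So the proposal is a reasonable roadmap that correctly isolates where the difficulty lies, but it does not close the gap on which the whole statement rests.
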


In what follows we denote
\begin{align}
&B(x,t; x_1, x_2)=B_{\alpha, \beta}(x,t; x_1, x_2),\label{breather-simp-not}\\
&B_{j}(x,t ; x_1, x_{2}):=\partial_{x_{_j}} B_{\alpha, \beta}(x,t ; x_1, x_2),\; j=1,2,\label{B1-B2}
\end{align}
with $B_{\alpha, \beta}$ defined in \eqref{semibreather}, in order to simplify  future computations.

\section{Almost conserved Lyapunov functional}\label{Section_Fun-Est}

In this section we will define a suitable Lyapunov functional in the spirit of \cite{AM}, keeping in mind the boundary terms. 

\medskip
The following functionals (obtained from the first three conserved quantities of \eqref{mkdv}) 
will be important to understand the dynamics of the solutions $u(\cdot, t)$ of the  IBVP \eqref{IBVP_left} close to breathers,
\begin{equation}\label{Mass}
	M[u](t):=\frac12\int_{\R^-}u^2(x,t)dx,\quad  {\it \text{(mass)}}
\end{equation}
\begin{equation}\label{Energy}
	E[u](t):=\int_{\R^-}\Big( \frac12 u_x^2(x,t) -\frac14 u^4(x,t)\Big)dx,\quad {\it(\text{energy})}
\end{equation}
and
\begin{equation}\label{Second-Energy}
	F[u](t):=\int_{\R_-} \Big( \frac{1}{2}u^2_{x x}(x, t) -\frac{5}{2} u^{2}(x, t) u_{x}^{2}(x,t) +\frac{1}{4} u^{6}(x, t) \Big)dx,\quad   {\it(\text{second order energy})}
\end{equation}
which are well-defined for solutions in $\mathcal{C}(\R;H^2(\R^-))$. 

\medskip 
Before presenting some key functional estimates, we define the following nonlinear terms 
which will appear in the computations. Explicitly,  in the current context of half-line domains, they arise as additional factors associated to boundary terms. Namely
\begin{equation}\label{trace-terms-M}
	\tau_M(x,t):= \tfrac{1}{2}u^2_x -u_{xx}u - \tfrac{3}{4}u^4,
\end{equation}

\begin{equation}\label{trace-terms-E}
	\tau_E(x,t):= \tfrac{1}{2}u^6 + u^3u_{xx} + \tfrac{1}{2}u^2_{xx}  - u_{xxx}u_x - 3u^2u_x^2 	
\end{equation}
and 
\begin{multline}\label{trace-terms-F}
	\tau_F(x,t):=  -u_t(u^3)_x - \tfrac{9}{2}u^4u_x^2 + \tfrac{1}{2}u_{xxx}^2 + u_{xx}u_{xt}\\ -u^2u^2_{xx} -2u_tu^2u_x + \tfrac{3}{4}u^4u_x^2 -\tfrac{1}{4}u_x^4 + uu_x^2u_{xx}-\tfrac{3}{2}u^5u_{xx} -\tfrac{9}{16}u^8.
\end{multline}

Note that the above trace terms  $\tau_M(0,t)$, $\tau_E(0,t)$ and $\tau_F(0,t)$ are well defined for solutions $u$ 
on the space 

$$\mathcal{U}_T(\R^-):=\Big\{ u\in \mathcal{C}(\R^+;H^2(\R^-)): \partial_{x}^{j} u \in\mathcal{C}\left(\mathbb{R}_{x}^{-} ; H^{(3-j) / 3}(0, T)\right),\; j=0,1,2,3\Big\}.$$

\noindent
Moreover, note the following:
\begin{lemma}\label{lemma}
	Let $u=u(x,t)$  be the solution of the IBVP \eqref{IBVP_left} with
	initial data $u_0 \in H^2(\R^-)$. Then, the following identities are satisfied:
	\begin{equation}\label{mass-ident}
		M[u](t)= M[u_0] + \int_0^t\tau_M(0,s),
	\end{equation}
	\begin{equation}\label{energy-ident}
		E[u](t) = E[u_0] +\int_0^t\tau_E(0,s)ds,
	\end{equation}
	and 		
	\begin{equation}\label{second-energy-ident}
		F[u](t) = F[u_0] + \int_0^t\tau_F(0, s)ds,
	\end{equation}
	for all $t\ge 0$.  Moreover, under  homogeneous boundary conditions 
	\begin{equation}\label{lemma-h-condit}
		u(0,t)=0\quad \text{and}\quad u_x(0,t)=0
	\end{equation}
	we have 
	\begin{equation}\label{mass-ident-h}
		M[u](t) = M[u_0],
	\end{equation}
	\begin{equation}\label{energy-ident-h}
		E[u](t) \ge  E[u_0],
	\end{equation}
	and 		
	\begin{equation}\label{second-energy-ident-h}
		F[u](t) = F[u_0],
	\end{equation}
	for all $t\ge 0$.
\end{lemma}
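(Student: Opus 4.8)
The plan is to read each of the three identities as a local (density--flux) conservation law for the mKdV equation, restricted to $\R^-$. For a sufficiently regular solution of \eqref{IBVP_left} I would write $u_t=-u_{xxx}-(u^3)_x$ and differentiate the relevant density in time; after repeated integration by parts the integrand collapses to a perfect spatial derivative $\partial_x\mathcal F_\bullet$ of a flux $\mathcal F_\bullet$. Integrating over $\R^-$ and using that $u$ and its $x$-derivatives up to third order decay at $-\infty$ (so $\mathcal F_\bullet(-\infty,t)=0$), one gets $\tfrac{d}{dt}(\cdot)=\mathcal F_\bullet(0,t)$, and the identities \eqref{mass-ident}--\eqref{second-energy-ident} follow by integrating in $t$. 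The whole-line conservation of $M$, $E$, $F$ recorded in \cite{AM} is precisely the statement that such fluxes exist; here I only need to keep their boundary values at $x=0$, which are by definition $\tau_M,\tau_E,\tau_F$.

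I would dispatch the easy cases first. For the mass, $\partial_t(\tfrac12 u^2)=u u_t=\partial_x\big(\tfrac12 u_x^2-u u_{xx}-\tfrac34 u^4\big)$ after two integrations by parts, and the flux at $x=0$ is exactly $\tau_M$. For the energy, differentiating $\tfrac12 u_x^2-\tfrac14 u^4$, integrating by parts once in the $u_x u_{xt}$ term and substituting the equation in $-(u_{xx}+u^3)u_t$, the integrand becomes $\partial_x\big[\tfrac12(u_{xx}+u^3)^2-u_x u_{xxx}-3u^2u_x^2\big]$, whose value at $0$ equals $\tau_E$ after expanding the square. Both computations are short and essentially dictate the form of $\tau_M$ and $\tau_E$.

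The main obstacle is the second-order energy identity \eqref{second-energy-ident}. Differentiating the density $\tfrac12 u_{xx}^2-\tfrac52 u^2u_x^2+\tfrac14 u^6$ produces $u_{xx}u_{xxt}-5uu_tu_x^2-5u^2u_xu_{xt}+\tfrac32 u^5u_t$, and reducing this to a single $\partial_x$ requires many integrations by parts with careful bookkeeping of the high-order and mixed space--time derivatives. Unlike the previous cases I would \emph{not} eliminate $u_t$ and $u_{xt}$ everywhere; keeping them where convenient is exactly what reproduces the stated flux $\tau_F$, which still contains $u_t$ and $u_{xt}$. Matching the result term by term with \eqref{trace-terms-F} is the calculation demanding the most care, and I would defer it to the appendix. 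Throughout, the identities are first justified for smooth solutions and then extended to the class $\mathcal U_T(\R^-)$ by density, using the trace regularity in \eqref{space-functions} to guarantee that $\tau_M(0,\cdot),\tau_E(0,\cdot),\tau_F(0,\cdot)$ are well defined.

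Finally, for the homogeneous boundary conditions \eqref{lemma-h-condit} I would evaluate the three traces at $x=0$. Since $u(0,t)=u_x(0,t)=0$ hold for all $t$, differentiating in $t$ gives $u_t(0,t)=u_{xt}(0,t)=0$, and every monomial in $\tau_M$ and in $\tau_F$ carries at least one factor among $u,u_x,u_t,u_{xt}$ \emph{except} the single term $\tfrac12 u_{xxx}^2$ in $\tau_F$. The crucial observation is the compatibility relation obtained by reading the equation at the boundary: $0=u_t(0,t)=-u_{xxx}(0,t)-3u^2u_x(0,t)=-u_{xxx}(0,t)$, so $u_{xxx}(0,t)=0$ and hence $\tau_F(0,t)=0$, giving \eqref{mass-ident-h} and \eqref{second-energy-ident-h}. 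For the energy the same reduction leaves only $\tau_E(0,t)=\tfrac12 u_{xx}^2(0,t)$, which has a sign but need not vanish because $u_{xx}(0,t)$ is unconstrained; integrating in $t$ yields the one-sided bound $E[u](t)\ge E[u_0]$ in \eqref{energy-ident-h}.
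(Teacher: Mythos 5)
Your proposal is correct and follows essentially the same route as the paper's Appendix~\ref{proof-lemma}: differentiate each density in time, integrate by parts on $\R^-$ (equivalently, write the integrand as a perfect $x$-derivative), and read off the boundary fluxes $\tau_M,\tau_E,\tau_F$ at $x=0$, with the sign of $\tfrac12 u_{xx}^2(0,t)$ giving the one-sided energy bound. Your explicit use of the compatibility relation $u_{xxx}(0,t)=-u_t(0,t)-3u^2(0,t)u_x(0,t)=0$ to eliminate the $\tfrac12 u_{xxx}^2$ term in $\tau_F$ is a detail the paper leaves implicit, but otherwise the arguments coincide.
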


\begin{proof}
See the proof in Appendix \ref{proof-lemma}.
\end{proof}

\begin{cor}\label{cor-gwp}
The local solution of IBVP \eqref{IBVP_left} with initial data $u_0 \in H^2(\R^-)$ and homogeneous boundary conditions
$$u(x,0) = u_x(0,t) = 0, \quad t\in [0, T),$$
described in Remark \ref{rem-wp}-(a), can be extended globally in time. 
\end{cor}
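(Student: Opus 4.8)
The plan is to upgrade the local solutions of Remark \ref{rem-wp}-(a) to global ones by the standard continuation (blow-up alternative) argument, the only nontrivial input being an a priori bound on $\|u(t)\|_{H^2(\R^-)}$ that depends solely on $\|u_0\|_{H^2(\R^-)}$. The local theory produces, for data $u_0\in H^2(\R^-)$ satisfying the homogeneous boundary conditions \eqref{lemma-h-condit}, a solution in $\mathcal{U}_{T_0}(\R^-)$ on an interval $[0,T_0]$ whose length $T_0$ can be taken to depend only on $\|u_0\|_{H^2(\R^-)}$. Since the homogeneous conditions $u(0,t)=u_x(0,t)=0$ are propagated throughout $[0,T_0]$, the conservation identities of Lemma \ref{lemma} are available on this interval; in particular \eqref{mass-ident-h} and \eqref{second-energy-ident-h} hold.

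First I would fix $m_0:=\|u_0\|_{L^2(\R^-)}$, which by mass conservation \eqref{mass-ident-h} equals $\|u(t)\|_{L^2(\R^-)}$ for every $t$. Next I would exploit that the second-order energy is genuinely conserved, \eqref{second-energy-ident-h}, rewriting \eqref{Second-Energy} as
\begin{equation*}
\tfrac12\|u_{xx}(t)\|_{L^2(\R^-)}^2 = F[u_0] + \tfrac52\int_{\R^-}u^2u_x^2 - \tfrac14\int_{\R^-}u^6 .
\end{equation*}
The two nonlinear integrals are controlled by Gagliardo--Nirenberg together with the interpolation inequality
\begin{equation*}
\|u_x\|_{L^2(\R^-)}^2 \le \|u\|_{L^2(\R^-)}\,\|u_{xx}\|_{L^2(\R^-)},
\end{equation*}
which on $\R^-$ follows from integration by parts, the boundary term at $x=0$ vanishing precisely because $u(0,t)=0$. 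Using $\|u\|_{L^\infty(\R^-)}^2\lesssim m_0\|u_x\|_{L^2(\R^-)}$ one gets $\int u^2u_x^2 \lesssim m_0^{5/2}\|u_{xx}\|^{3/2}$ and $\int u^6 \lesssim m_0^{5}\|u_{xx}\|$, so that
\begin{equation*}
\tfrac12\|u_{xx}(t)\|_{L^2(\R^-)}^2 \le |F[u_0]| + C\big(m_0^{5/2}\|u_{xx}(t)\|_{L^2(\R^-)}^{3/2} + m_0^{5}\|u_{xx}(t)\|_{L^2(\R^-)}\big).
\end{equation*}
Since the exponents $3/2$ and $1$ lie strictly below $2$, Young's inequality absorbs these terms into the left-hand side and yields $\|u_{xx}(t)\|_{L^2(\R^-)}\le C(\|u_0\|_{H^2(\R^-)})$, uniformly in $t$; combined with the interpolation bound and mass conservation this gives $\sup_t\|u(t)\|_{H^2(\R^-)}\le C(\|u_0\|_{H^2(\R^-)})$. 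I emphasize that the energy identity \eqref{energy-ident-h} is only a monotonicity (the boundary flux $\tfrac12u_{xx}^2(0,t)$ has a sign) and therefore does not by itself bound $\|u_x\|$ from above; it is the genuine conservation of $F$, not of $E$, that closes the top-order estimate.

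With this uniform bound in hand I would run the continuation argument: because $\sup_t\|u(t)\|_{H^2(\R^-)}\le C(\|u_0\|_{H^2(\R^-)})$, each reapplication of the local theory — on $[T_0,2T_0]$, $[2T_0,3T_0]$, and so on — can be performed with an existence step bounded below by a fixed positive constant, so the solution extends to all of $\R^+$ and is global. The main obstacle is the boundary bookkeeping: one must check that the homogeneous conditions \eqref{lemma-h-condit} are genuinely propagated so that the identities of Lemma \ref{lemma} apply at every step, that no boundary contribution survives the integration by parts behind the interpolation inequality, and that the nonlinear terms entering at the $\dot H^2$ level are strictly subcritical — a reflection of the $L^2$-criticality of mKdV, and exactly what legitimizes the absorption step above.
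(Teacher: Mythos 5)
Your proof is correct and follows essentially the same route as the paper: combine the conservation of mass \eqref{mass-ident-h} and of the second-order energy \eqref{second-energy-ident-h} with Gagliardo--Nirenberg-type estimates to bound $\|u_{xx}(t)\|_{L^2(\R^-)}$ after absorbing the subcritical term $\tfrac52\int_{\R^-}u^2u_x^2\lesssim \|u_{xx}\|_{L^2}^{3/2}$ via Young's inequality, then iterate the local theory. The only (immaterial) difference is that the paper first integrates $\int_{\R^-}u^2u_x^2$ by parts into $-\tfrac56\int_{\R^-}u^3u_{xx}$ and applies $\|u\|_{L^6}\lesssim\|u_{xx}\|^{1/6}\|u_0\|^{5/6}$, whereas you use the $L^\infty$ embedding together with $\|u_x\|^2\le\|u\|\,\|u_{xx}\|$; both land on the same exponent.
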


\begin{proof}
The idea is to derive an apriori estimate of the norm
$$\|u(\cdot, t)\|_{L^2(\R^-)} + \|u_{xx}(\cdot, t)\|_{L^2(\R^-)},$$
by using the conservation of the functionals \eqref{mass-ident-h} and \eqref{second-energy-ident-h}.

\medskip
In view of the conservation \eqref{mass-ident-h} we only need to get a control of the $\|u_{xx}(\cdot, t)\|_{L^2(\R^-)}$. To proceed, we first   note that from \eqref{second-energy-ident-h} we have
\begin{equation}\label{cor-gwp-1}
\int_{\R^-}\Big(\frac{1}{2}u^2_{xx} + \frac{1}{4}u^6\Big)= F[u_0] + \frac{5}{2}\int_{\R^-}u^2u^2_x.
\end{equation}
Now using integration by parts and the homogeneous boundary conditions one gets 
\begin{equation}\label{cor-gwp-2}
\psi[u]:= \frac{5}{2}\int_{\R^-}u^2u^2_x= -\frac{5}{2}\int_{\R^-}u^3u_{xx} dx -2 \psi[u].
\end{equation}
So, from \eqref{cor-gwp-2}  it follows that 
\begin{equation}\label{cor-gwp-3}
\psi[u] = -\frac{5}{6}\int_{\R^-}u^3u_{xx} \le \frac{5}{6}\|u\|^3_{L^6_x(\R^-)}\|u_{xx}\|_{L^2_x(\R^-)}.
\end{equation}

\medskip 
On the other hand by using a Gagliardo-Nirenberg inequality and  \eqref{mass-ident-h}, we get that 
\begin{equation}\label{cor-gwp-4}
\|u\|_{L^6_x(\R^-)} \lesssim  \|u_{xx}\|^{1/6}_{L^2_x(\R^-)}\|u_0\|^{5/6}_{L^2_x(\R^-)}.
\end{equation}
Thus, using \eqref{cor-gwp-4}  in \eqref{cor-gwp-3}, combined with Young's inequality, we have the estimate:
\begin{equation}\label{cor-gwp-5}
\begin{split}
\psi[u]=\frac{5}{2}\int_{\R^-}u^2u^2_x& \le C_1\|u_{xx}\|^{3/2}_{L^2_x(\R^-)}\|u_0\|^{5/2}_{L^2_x(\R^-)}\\
&\le \frac{1}{4}\|u_{xx}\|^2_{L^2_x(\R^-)} + C_2\|u_0\|^{10}_{L^2_x(\R^-)},
\end{split}
\end{equation}
for some positive constants $C_1$ and $C_2$. Finally, putting  the estimate \eqref{cor-gwp-5} in  \eqref{cor-gwp-1}  we obtain
$$
\frac{1}{4}\|u_{xx}\|^{2}_{L^2_x(\R^-)}\le \int_{\R^-}\Big(\frac{1}{4}u^2_{xx} + \frac{1}{4}u^6\Big) \le  F[u_0] +  C_2\|u_0\|^{10}_{L^2_x(\R^-)},
$$
and we have the desired apriori control for the $\|u_{xx}\|_{L^2_x(\R^-)}$. Then, the proof is finished.  
\end{proof}

\begin{rem}[About  breathers \textit{moving rightwards}]\label{right}
It is important to note that in the case of the right half-line ($\R^+$), the correspon\-ding trace terms would be 	$-\tau_M(0,t)$, $-\tau_E(0,t)$ 
and 	$-\tau_F(0,t)$. Hence, since the homogeneous boundary condition $u_x(0, t)=0$ is not allowed in the corresponding IBVP on 
$\R^{+}$, we see that, only by using the homogeneous condition  $u(x, 0)=0$, the 
term $\big(\frac{1}{4}u^4_x-u_{xx}u_{xt}\big)(0,t)$ remains in $-\tau_F(x,0)$, and this nonlinear term is difficult to control. This fact prevents us from
 building a Lyapunov functional on the right half-line. This is the main reason to not address here the case of breathers \textit{moving rightwards}.
\end{rem}

Now, we are able to introduce an almost conserved Lyapunov functional, specifically related to the breather function $B_{\alpha,\beta}$ on
$\R^-$ \eqref{semibreather}. Let  $t>0$ 
and $M[u]$, $E[u]$ and $F[u]$ the conserved quantities defined in \eqref{Mass}-\eqref{Energy}-\eqref{Second-Energy}. Based on the work \cite{AM} we define the restricted Lyapunov functional
\begin{equation}\label{Hlyapunov}
\mathcal{H}[u](t):=F[u](t)+2\left(\beta^{2}-\alpha^{2}\right) E[u](t)+\left(\alpha^{2}+\beta^{2}\right)^{2} M[u](t).
\end{equation}

\medskip 
Note that, by using Lemma \ref{lemma} with  $\beta\leq \alpha$,  the functional $\mathcal H$  is well defined for initial conditions $u_0\in H^2(\R^-)$ and homogeneous boundary conditions. Therefore,  $\mathcal H$  has the following monotonicity property
\begin{equation}\label{upper-bound-Lypunov}
\mathcal{H}[u](t)\leq \mathcal{H}[u](0),\;\, \text{for all}\;\,t>0.
\end{equation}

\begin{rem}[About breather's parameters]
	The condition $\beta \leq \alpha$ is consistent with the first hypothesis $\beta \leq \sqrt 3 \alpha$ 
	imposed in order to treat the case of mKdV breathers \textit{moving leftwards}. However, 
	we can not use \eqref{upper-bound-Lypunov} in the case $\alpha < \beta \leq \sqrt 3 \alpha$ because we do not
	control the right sign in $\mathcal{H}$ \eqref{Hlyapunov}, a contradiction with the energy growth. 
	In fact,  in this interval, the stability question remains open.
\end{rem}

Let $z \in H^{2}(\mathbb{R}^-),$ and $B=B_{\alpha, \beta}$ be any restricted mKdV breather \eqref{semibreather}. 
We define, the corresponding restriction to $\R^{-}$ of the quadratic form associated to $\mathcal{L}$ (see \eqref{quadratic}):
\begin{equation}\label{quadraticRest}
\begin{aligned}
\mathcal{Q}[z]&:=\int_{\mathbb{R}^-} z \mathcal{L}[z]\\
& = \int_{\mathbb{R}^-} z_{x x}^{2}+2\left(\beta^{2}-\alpha^{2}\right) \int_{\mathbb{R}^-} z_{x}^{2}+\left(\alpha^{2}+\beta^{2}\right)^{2} \int_{\mathbb{R}^-} z^{2}-5 \int_{\mathbb{R}^-} B^{2} z_{x}^{2}\\
&\hspace{2cm} + 5 \int_{\mathbb{R}^-} B_{x}^{2} z^{2}+10 \int_{\mathbb{R}^-} B B_{x x} z^{2}+\frac{15}{2} \int_{\mathbb{R}^-} B^{4} z^{2}-6\left(\beta^{2}
-\alpha^{2}\right) \int_{\mathbb{R}^-} B^{2} z^{2}.
\end{aligned}
\end{equation}

\medskip 
Now, in the spirit of \cite{AM} we have the following result.

\begin{lemma}\label{pertubation} Let $z \in H^{2}(\mathbb{R}^-)$ be any function with sufficiently small $H^{2}$-norm, and $B=B_{\alpha, \beta}$ be any breather function \eqref{semibreather}. Then, for all $t \in \mathbb{R},$ one has that $\mathcal H$ \eqref{Hlyapunov} verifies

\begin{multline}\label{pertubation-a}
\mathcal{H}[B+z]-\mathcal{H}[B]=\frac{1}{2} \mathcal{Q}[z]+\mathcal{N}[z] + B_{xx}(x=0,t)z_x(x=0,t)- B_{3x}(x=0,t)z(x=0,t)\\ 
-5B^2B_{x}(x=0,t)z(x=0,t) + B_{x}(x=0,t)z(x=0,t),
\end{multline}
with $\mathcal{Q}$ being the quadratic form defined in \eqref{quadraticRest} and $\mathcal{N}[z]$ satisfying $|\mathcal{N}[z]| \leq K\|z\|_{H^{2}(\mathbb{R}^-)}^{3}.$
\end{lemma}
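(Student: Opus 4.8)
The strategy is to expand the Lyapunov functional $\mathcal{H}[B+z]$ around the breather $B$, collecting terms by their degree of homogeneity in the perturbation $z$, and to track carefully every boundary contribution produced by integration by parts on the truncated domain $\R^{-}$. Recall that $\mathcal{H}=F+2(\beta^2-\alpha^2)E+(\alpha^2+\beta^2)^2 M$, so it suffices to expand each of $M[B+z]$, $E[B+z]$ and $F[B+z]$ separately and then take the linear combination. The zeroth-order terms reproduce $\mathcal{H}[B]$, the quadratic terms must assemble into $\tfrac12\mathcal{Q}[z]$ with $\mathcal{Q}$ as in \eqref{quadraticRest}, the cubic and higher terms are absorbed into $\mathcal{N}[z]$ with the stated bound, and the surviving first-order (linear in $z$) terms must collapse — after using the stationary equation $G[\widetilde B]=0$ from \eqref{ODEBreather} — into pure boundary traces, which are exactly the four terms displayed on the right-hand side of \eqref{pertubation-a}.

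\textbf{Step 1: expansion of each functional.} First I would substitute $u=B+z$ into \eqref{Mass}, \eqref{Energy}, \eqref{Second-Energy} and expand the polynomial integrands. For $M$ this is immediate: $M[B+z]=M[B]+\int_{\R^-}Bz+\tfrac12\int_{\R^-}z^2$. For $E$ and especially $F$ the algebra is heavier because of the terms $u^4$, $u^2u_x^2$, $u^6$ and $u_{xx}^2$, but it is entirely routine; the only care needed is to keep the quadratic-in-$z$ part exact and to dump every term of order $\geq 3$ in $z$ into $\mathcal{N}[z]$. The bound $|\mathcal{N}[z]|\le K\|z\|_{H^2(\R^-)}^3$ then follows from Sobolev embedding $H^2(\R^-)\hookrightarrow L^\infty(\R^-)$ together with the uniform boundedness of $B$ and its derivatives (all Schwartz-class on the line, hence bounded on $\R^-$), since each higher-order term is a product of at least three factors of $z$ or its derivatives times a bounded coefficient.

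\textbf{Step 2: the linear terms and integration by parts.} The crux is the first-order part. Collecting the coefficients of $z$ (and $z_x$, $z_{xx}$) across $\tfrac12 M$-, $E$- and $F$-contributions, I would integrate by parts repeatedly to move all derivatives off $z$ and onto $B$, so that the bulk integral becomes $\int_{\R^-} z\,\big(\text{differential expression in } B\big)$. By construction of $\mathcal{H}$ this differential expression is precisely $G[\widetilde B]$ evaluated at $B$, which vanishes identically by \eqref{ODEBreather}; hence the entire bulk linear term disappears. What remains are exactly the boundary terms generated at $x=0$ by the integrations by parts (the contributions at $x=-\infty$ vanish by the Schwartz decay of $B$ and the $H^2$ decay of $z$). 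Matching these surviving traces against the terms coming from $F$ (which produce $B_{xx}z_x$, $-B_{3x}z$, $-5B^2B_x z$ via the $u^2u_x^2$ and $u_{xx}^2$ pieces) and from $2(\beta^2-\alpha^2)E$ and $(\alpha^2+\beta^2)^2 M$ yields the four boundary terms stated in \eqref{pertubation-a}.

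\textbf{Main obstacle.} The delicate point is the bookkeeping in Step 2: one must integrate by parts on $\R^-$ with the explicit awareness that each such step leaves a boundary trace at $x=0$, and one must verify that after using $G[\widetilde B]=0$ the leftover traces coalesce into exactly the four listed terms with the correct signs and coefficients (in particular the cancellation or combination of lower-order traces arising from the different integrations by parts of $z_{xx}^2$, $B^2z_x^2$ and the energy terms). Getting the coefficients $+B_{xx}z_x$, $-B_{3x}z$, $-5B^2B_xz$ and $+B_xz$ to match requires handling the boundary contributions consistently and is where a sign slip is most likely; this is the step I would check most carefully, whereas the quadratic identification with $\mathcal{Q}[z]$ and the cubic bound on $\mathcal{N}[z]$ are comparatively mechanical.
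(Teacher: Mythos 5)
Your proposal is correct and follows essentially the same route as the paper: expand $\mathcal{H}[B+z]$, use the stationary identity $G[B]=0$ from \eqref{ODEBreather} to annihilate the bulk linear-in-$z$ contribution, identify the quadratic part with $\tfrac12\mathcal{Q}[z]$, bound the cubic-and-higher remainder via the embedding $H^2(\R^-)\hookrightarrow L^\infty(\R^-)$, and read off the four boundary traces at $x=0$ produced by integrating by parts the terms $\int_{\R^-}B_{xx}z_{xx}$, $\int_{\R^-}B^2B_xz_x$ and $\int_{\R^-}B_xz_x$. The paper's own proof is exactly this sketch (deferring the algebra to \cite[Lemma 5.2]{AM}), so there is nothing to add.
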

\begin{proof} Just following \cite[Lemma 5.2]{AM}, we skip the details. Namely, 
expanding $\mathcal{H}[B+z]-\mathcal{H}[B]$ and collecting terms proportional to $z$, the only difference is that some trace 
terms appear as a consequence of the integration by parts. Indeed, 

\medskip 
	$$\frac12 \int_{\R^-} {2}B_{xx}z_{xx}=\int_{\mathbb{R}^-}B_{4x}z + B_{xx}(x=0,t)z_x(x=0,t)- B_{3x}(x=0,t)z(x=0,t),$$ 
	
	$$-\frac52 \int_{\R^-} 2B^2B_{x}z_{x}= -\frac52 \int_{\R^-} (-2B^2B_{xx}z - 4BB_x^2z) -5B^2B_{x}(x=0,t)z(x=0,t),$$
	
	\medskip 
	\noindent
	and
	
	$$\frac12 \int_{\R^-}2B_{x}z_{x}=\frac12 \int_{\R^-}-2B_{xx}z + B_{x}(x=0,t)z(x=0,t).$$
	
	\medskip 
	 Notice that the restricted breather $B$ \eqref{semibreather} also satisfies the differential identities given in Lemma  \ref{equations}, and hence the following fundamental identity
	\[G[B]:=B_{(4 x)}-2\left(\beta^{2}-\alpha^{2}\right)\left(B_{x x}+B^{3}\right)+\left(\alpha^{2}+\beta^{2}\right)^{2} B+5 B B_{x}^{2}+5 B^{2} B_{x x}+\frac{3}{2} B^{5}=0,\] 
	\noindent
    	it was used in the above expansion (see  Lemma \ref{equations}-{\it(iv)} for details).
\end{proof}

\section{Proof of Theorem \ref{teorema}}\label{ProofTheo}

The proof follows some ideas developed in \cite{AM} and \cite{CM}. These ideas allow us to avoid some problems caused by the trace terms $\tau_M(0,t)$, $\tau_E(0,t)$ and $\tau_F(0,t)$. In our proof, we adapted these previous arguments to the restricted breather $B$. The control of the shift function $\rho_2$, obtained in Lemma \ref{lemma-modulation}, will be a key step in the proof.

\subsection{Starting of the proof of Theorem \ref{teorema}}

Take $\alpha$ and $\beta$ satisfying $0< \beta\leq \alpha$ and fix  $L>L_0$, where $L_0$ will be taken larger enough. Assume that 
\begin{equation}\label{initial2}
\left\|u_0 - B(\cdot, 0 ; 0,L)\right\|_{H^{2}(\R^-)} \leq \eta
\end{equation}
is satisfied for $u_0$ and for $\eta\leq \eta_0$  with $\eta_0$ small enough to be chosen later.

\medskip
Let $u(\cdot, t) \in \mathcal{C}\big( \R^+, H^2(\R^-)\big)$ be the associated solution of the IBVP \eqref{IBVP_left} with initial data $u(x,0)=u_0$ and homogeneous boundary 
conditions.   By using the continuity of the flow (see Remark \ref{rem-wp}), given $\eta>0$ there exist a small time $T_0$ and continuous parameter functions $\rho_j(t)\, (j=1,2)$ such that 
\begin{equation}\label{T0-initial-control}
\sup\limits_{0 \le t \le T_0} \|u(\cdot, t) - B(\cdot, t, \rho_1(t), \rho_2(t) +L)\|_{H^2(\R^-)} \le 2\eta
\end{equation}
for all $0\le t\le T_0$. 

\medskip 
Let $K_0>2$ a constant  to be fixed later and consider the \textit{maximal time of stability}, defined as follows:
\begin{multline}\label{maximal-time-est}
T_*:=
\sup\bigg\{ T>0:\; \text{for all}\;  t\in [0, T] \;\text{there exist}\; \rho_1(t), \rho_2(t)\in \R\; \text{such that}
\\
\sup\limits_{0\le t \le T}\|u(\cdot, t) -B(\cdot, t, \rho_1(t), \rho_2(t) +L)\|_{H^2(\R^-)}\le  K_0 
(\eta + e^{-\beta L/2}) \bigg\}.
\end{multline}
Notice that from \eqref{T0-initial-control} we have that $T_*$ is well-defined. 

\medskip 
By choosing $L$ and $K_0$ large, with $\eta\leq \eta_0$, we will prove that $T_*=\infty$. The idea is to use a contradiction argument under the assumption 
$T_*<\infty$. Indeed, as we will see, a bootstrap type argument will ensure the inequality
\begin{equation}\label{property1}
\|u(\cdot, t) -B(\cdot, t, \rho_1(t), \rho_2(t) +L)\|_{H^2(\R^-)}\le  \frac{1}{2}K_0 (\eta + e^{-\beta L/2}),
\end{equation}
for all $0\le t \leq T_*$, which is a contradiction with the definition of $T_*$ (if it is finite).

\medskip 
We split the proof of \eqref{property1} in the following steps: first of all in subsection \ref{sub-modulation} we establish the modulation theory and 
exponential decays for the modulated breather in the boundary. 
Next, in subsection \ref{sub-error-estimate} we give some error estimates for the evolution in 
time of the  restricted Lyapunov functional \eqref{Hlyapunov}. Finally, in subsection \ref{sub-end-proof}, we derive 
the desired inequality \eqref{property1} to complete the proof.

\subsection{Modulation}\label{sub-modulation}

Using the notation introduced in \eqref{breather-simp-not}-\eqref{B1-B2} we have the following result.

\begin{lemma}\label{lemma-modulation}
 Let $T_{*}$ defined in \eqref{maximal-time-est}. There exist constants, $\eta_{0}>0$ small enough and $L_0$ large enough such that, for all $\eta \in\left(0, \eta_{0}\right)$ and $L>L_0$, the following holds. There exist continuous functions $\rho_{1}: [0, T_*] \to \R$ and $\rho_{2}:[0, T_*] \to  \left(-\frac{L}{2}, \frac{L}{2}\right)$, such that 

\begin{equation}\label{z}
z(x,t):=u(x,t)-B(x,t, \rho_1(t), \rho_2(t) +L) 
\end{equation}
satisfies the orthogonality conditions
\begin{equation}\label{ort-cond}
\int_{\R^-}B_j\left(x,t ; \rho_{1}(t), \rho_{2}(t)+L\right) z(x,t)dx=0, \quad j=1,2,
\end{equation}
for all $t \in [0, T_*]$. Moreover, there exist a positive constant $K>0$, independent of $K_0$, ensuring the following estimates:
\begin{align}
&\|z(\cdot, t)\|_{H^{2}(\mathbb{R}^-)} \leq K K_0 (\eta+e^{-\beta L/2}),\label{rho}\\ 
&\|z(\cdot, 0)\|_{H^{2}(\mathbb{R}^-)} \leq K( \eta+e^{-\beta L/2   }).
\end{align}
\end{lemma}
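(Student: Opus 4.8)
The plan is to prove Lemma~\ref{lemma-modulation} by a standard modulation argument based on the implicit function theorem, applied uniformly in $t\in[0,T_*]$, and then to upgrade the resulting control into the quantitative $H^2$ estimates. First I would set up the map whose zeros encode the orthogonality conditions \eqref{ort-cond}. Define, for $(\rho_1,\rho_2)$ in a neighborhood of $(0,0)$ and for the solution $u(\cdot,t)$,
\begin{equation*}
\Phi_j(\rho_1,\rho_2,t):=\int_{\R^-}B_j\bigl(x,t;\rho_1,\rho_2+L\bigr)\,\bigl(u(x,t)-B(x,t,\rho_1,\rho_2+L)\bigr)\,dx,\quad j=1,2.
\end{equation*}
At the unperturbed configuration $u=B(\cdot,t;0,L)$ one has $\Phi_j=0$, and the Jacobian matrix $\bigl(\partial_{\rho_k}\Phi_j\bigr)_{j,k}$ reduces, modulo terms proportional to $z$, to the Gram matrix $\bigl(\int_{\R^-}B_jB_k\bigr)_{j,k}$. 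Since $\widetilde B_1,\widetilde B_2$ are linearly independent Schwartz functions (as recalled after \eqref{B1-B2-tilde}), and since the restriction to $\R^-$ introduces only exponentially small $O(e^{-\beta L})$ corrections when the breather is centered far to the left (i.e. $x_2=\rho_2+L$ with $L$ large and $\rho_2\in(-L/2,L/2)$), this Gram matrix is invertible with a lower bound on its determinant depending only on $\alpha,\beta$. The implicit function theorem then produces, for each $t$, unique small $\rho_1(t),\rho_2(t)$ solving $\Phi_1=\Phi_2=0$; continuity in $t$ follows from the continuity of the flow $t\mapsto u(\cdot,t)$ in $H^2(\R^-)$ together with the smoothness of $B$ in its parameters, and the constraint $\rho_2(t)\in(-L/2,L/2)$ is maintained by shrinking $\eta_0$ and enlarging $L_0$.

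Next I would establish the estimates. The bound \eqref{rho} is immediate from the definition of $T_*$: by \eqref{maximal-time-est} we may, for each $t\le T_*$, choose some admissible shifts realizing $\|u-B(\cdot,t,\tilde\rho_1,\tilde\rho_2+L)\|_{H^2}\le K_0(\eta+e^{-\beta L/2})$, and a triangle-inequality comparison between the modulated shifts $(\rho_1,\rho_2)$ and these $(\tilde\rho_1,\tilde\rho_2)$---controlled by the Lipschitz dependence of $B$ on its parameters and the invertibility of the Jacobian---yields \eqref{rho} with a constant $K$ independent of $K_0$. For the initial-time bound, at $t=0$ the hypothesis \eqref{initial2} gives $\|u_0-B(\cdot,0;0,L)\|_{H^2}\le\eta$, and the orthogonality-adjusted shifts at $t=0$ differ from $(0,0)$ by $O(\eta+e^{-\beta L})$, so the same comparison delivers $\|z(\cdot,0)\|_{H^2}\le K(\eta+e^{-\beta L/2})$.

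The main obstacle, and the place where the half-line geometry genuinely intervenes, is ensuring that the Jacobian stays uniformly nondegenerate \emph{after} restriction to $\R^-$. On the full line the Gram matrix of $\widetilde B_1,\widetilde B_2$ is a fixed invertible matrix; on $\R^-$ it becomes $\int_{\R^-}B_jB_k=\int_{\R}\widetilde B_j\widetilde B_k-\int_0^{+\infty}\widetilde B_j\widetilde B_k$, and the correction term must be shown to be $O(e^{-\beta L})$ uniformly for $\rho_2\in(-L/2,L/2)$. This is where I would exploit the explicit exponential localization of the breather profile in the variable $y_2=x+\gamma t+x_2$: with $x_2=\rho_2+L$ and $L$ large, the $\sech(\beta y_2)$ envelope forces $\widetilde B_j(x,t)$ to be exponentially small for $x\ge 0$, and the rate $\beta$ is precisely what appears in $e^{-\beta L}$. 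I would record these as ``exponential decay for the modulated breather at the boundary'' bounds---morally the estimates flagged in the text preceding the lemma---namely $|\partial_x^k B(0,t,\rho_1,\rho_2+L)|\lesssim e^{-\beta L}$ for the relevant $k$, uniformly in $t$ and in admissible $\rho_j$. Once these are in hand the perturbation terms in the Jacobian are absorbed by taking $L_0$ large and $\eta_0$ small, and the invertibility, hence the whole modulation construction, goes through uniformly on $[0,T_*]$.
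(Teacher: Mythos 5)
Your proposal follows essentially the same route as the paper: both apply the implicit function theorem to the functionals $\mathcal{J}_j[v;\rho_1,\rho_2]=\int_{\R^-}\big(v-B(\cdot,t;\rho_1,\rho_2+L)\big)B_j$, with the Jacobian at $v=B(\cdot,t;0,L)$ reducing to the Gram matrix of $B_1,B_2$ on $\R^-$, whose invertibility yields the modulation parameters, their continuity in $t$, and the confinement $\rho_2(t)\in(-\tfrac{L}{2},\tfrac{L}{2})$. Your justification of the nondegeneracy (full-line Gram matrix minus an $O(e^{-\beta L})$ correction $\int_0^{+\infty}\widetilde B_j\widetilde B_k$) is in fact slightly more quantitative than the paper's Cauchy--Schwarz argument, and you additionally sketch the derivation of the estimate \eqref{rho} and the initial-time bound, which the paper's written proof leaves implicit.
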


\begin{proof}
Let $K_0$ and $T_*$ as defined in \eqref{maximal-time-est}. We first define the set 
\begin{equation}\label{initial-neighborhood-int}
\mathcal{V}_t[K_0]:=\left\{ v\in H^2(\R^-): \inf_{\rho_1,\rho_2\in \R}\|v - B(\cdot, t, \rho_1,\rho_2  + L)\|_{H^2(\R^-)}\leq K_0 	
(\eta + e^{-\beta L/2})\right\}
\end{equation}
and we note that 
\begin{equation}\label{initial-neighborhood-1}
\inf_{\rho_1,\rho_2\in \R}\|u(\cdot, t) - B(\cdot, t, \rho_1,\rho_2  + L)\|_{H^2(\R^-)}\leq K_0 
(\eta + e^{-\beta L/2})
\end{equation}
for all $0\le t \leq T_*$ with $K_0$ large enough.  Hence, 
\begin{equation}\label{initial-neighborhood-2}
u(\cdot, t) \in \mathcal{V}_t[K_0],\quad 0\le t \le T_*.
\end{equation}

\medskip 
The idea is to apply the Implicit Function Theorem. Firstly, we define the functional operator:
$$\mathcal{J}=(\mathcal{J}_1, \mathcal{J}_2): H^2(\R^-)\times\R\times \R\longrightarrow  \R^2,\; j=1,2,$$
with
\be\label{functional-components}
\mathcal J_j[v; \rho_1, \rho_{2}]:=\int_{\R^-}\big(v( x)-B(x,t ; \rho_1, \rho_2 +L)\big) B_j(x,t ; \rho_1, \rho_2 +L) d x.
\ee
We can check that  $\mathcal{J}_j\, (j=1,2)$ are of class $\mathcal{C}^1$ and also satisfy 
\begin{equation}\label{J-functionals-zero}
\mathcal{J}_j[B(\cdot ,t ; \rho_1, \rho_2 +L); \rho_1, \rho_2]=0,
\end{equation}
for all $\rho_1, \rho_2 \in \R$. In what follows we use the notation $\partial_k:=\partial_{\rho_{\!_k}}$   and $\partial_{kj}:=\partial^2_{\rho_{\!_k}\rho_{\!_j}}$. So, 
for $j,k=1,2$, one has 
\begin{multline}
\partial_k \mathcal{J}_j[v;\rho_1,\rho_2]=-\int_{\R^-} B_k(x,t,\rho_1,\rho_2 +L)B_j(x,t,\rho_1,\rho_2 +L)dx \; +\\ \int_{\R^-}\big(v-B(x,t,\rho_1,\rho_2 +L)\big)\partial_{kj}B(x,t,\rho_1,\rho_2 +L) dx.
\end{multline}
Hence, we have
\begin{equation}\label{calc1}
\frak{J}_{jk}:=\partial_k \mathcal{J}_j[v;\rho_1,\rho_2]\bigg|_{v=B(\cdot,t;0,L)}=-\int_{\R^-} B_k(x,t;0,L)B_j(x,t;0,L)dx.
\end{equation}
and we define $\frak{J}$ as the $2 \times 2$ matrix with components 
\begin{equation}\label{matrix-J}
\frak{J} = \big(\frak{J}_{jk}\big)_{j, k=1,2}.
\end{equation}

\medskip
As in \cite{AM}, putting $B_j(x,t):=B_j(x,t;0,L)$ we have from Cauchy-Schwarz inequality and the fact that $B_1$ and $B_2$ are not parallel for all time that
$$
\operatorname{det} \frak{J}=-\left[\int_{\R^-} B_1^2(x,t)dx\int_{\R^-} B_2^2(x,t)dx-\left(\int_{\R^-} B_1(x,t)B_2(x,t) dx\right)^{2}\right](t ; 0, L) \neq 0
$$  
for all $0\le t \leq T_*$. 

\medskip 
 Therefore, in a small neighbourhood $U_t\times I_t \times J_t\subset H^{2}(\R^-)\times \R \times \R$ of the point $\big( B(t;0,0,L),0,0\big)$, and for $t \in\left[0, T_{*}\right]$ (given by the definition of \eqref{maximal-time-est}), it is possible to write the decomposition \eqref{z} satisfying 
 
 \begin{equation}\label{J}\mathcal{J}[u(\cdot, t), \rho_1(t;u(\cdot, t)), \rho_2(t;u(\cdot, t))]=0,\quad 0\le t\leq T_*\end{equation}  
 \noindent
 for $\eta_0$  small enough, $L$  larger enough  and for unique functions 
 $$\rho_1:=\rho_1(t,u(\cdot,t))\in I_t\quad \text{and}\quad \rho_2:=\rho_2(t,u(\cdot,t))\in J_t\subset\left(-\tfrac{L}{2},\tfrac{L}{2}\right).$$
 \noindent
 This directly implies that $\rho_2(t)>-\frac{L}{2}$. We choose this in order to control the traces of the modulated breather (see Lemma \ref{boundary}). The uniqueness of the functions $\rho_1$ and $\rho_2$ is a consequence of the uniqueness coming from the  Implicit Function Theorem in each $U_t \times I_t\times J_t$.  
\end{proof}

The following result shows an estimate for the trace terms of the breather solution 
$B$ which is localized far away from the origin (i.e at distance $L$).

\begin{cor}[Boundary values of $B$]\label{boundary}
Let $B=B(x,t;\rho_1(t),\rho_2(t)+L)$ a restricted breather given by \eqref{semibreather}. Then the following estimate holds:
\begin{equation}\label{trace1}
\big|\partial_j\partial_x^kB(0,t;\rho_1(t),\rho_2(t)+L))\big|\leq C e^{-\beta L/2},\quad j=1,2,~~k=0,1,2,3. 
\end{equation}
\end{cor}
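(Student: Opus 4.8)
The plan is to exploit the explicit formula for the breather profile together with the fact that, by Lemma~\ref{lemma-modulation}, the shift parameter satisfies $\rho_2(t)>-\tfrac{L}{2}$, so that the $\sech$-envelope of the modulated breather is centered at the point $x=-(\rho_2(t)+L)$, which lies at distance at least $L/2$ to the left of the origin. Recall from Definition~\ref{defbreather} and \eqref{y1y2} that the breather depends on the localized variable $y_2=x+\gamma t+x_2$, where here the spatial shift is $x_2=\rho_2(t)+L$. Evaluating at the boundary $x=0$ gives $y_2\big|_{x=0}=\gamma t+\rho_2(t)+L$. The key point is that every factor of $\widetilde B_{\alpha,\beta}$ and its derivatives decays like $\sech(\beta y_2)$ or $\tanh$-bounded multiples thereof; more precisely, the primitive $\frak B$ and all the quantities built from it are, after differentiation, bounded by a constant times $\sech(\beta y_2)$, since $\sech$ and its derivatives decay exponentially and the periodic factors in $y_1$ are uniformly bounded.

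First I would record the structural fact that, for any fixed $j\in\{1,2\}$ and $k\in\{0,1,2,3\}$, the function $\partial_j\partial_x^k\widetilde B_{\alpha,\beta}(x,t;x_1,x_2)$ is a finite sum of terms each of the form $P(\cos\alpha y_1,\sin\alpha y_1,\tanh\beta y_2)\,\sech(\beta y_2)$, where $P$ is a bounded rational expression with denominator bounded below away from zero (the denominator $1+(\beta/\alpha)^2\sin^2(\alpha y_1)\sech^2(\beta y_2)\ge 1$, and similarly for $\mathcal M_{\alpha,\beta}$). This is exactly the Schwartz-class, exponentially-decaying-in-space behavior already noted after \eqref{B1-B2-tilde}. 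Consequently there is a constant $C=C(\alpha,\beta)$, depending only on $\alpha,\beta$ and the (bounded) number of terms, such that
\begin{equation*}
\big|\partial_j\partial_x^k\widetilde B_{\alpha,\beta}(x,t;x_1,x_2)\big|\le C\,\sech(\beta y_2)\le 2C\,e^{-\beta|y_2|}
\end{equation*}
uniformly in $t$, $x_1$, and the bounded periodic data.

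Next I would simply substitute $x=0$, $x_1=\rho_1(t)$, $x_2=\rho_2(t)+L$. At the boundary $y_2\big|_{x=0}=\gamma t+\rho_2(t)+L$. The delicate point is that this quantity need not be bounded below uniformly, because the breather travels: as $t$ increases the envelope moves leftward (since $-\gamma<0$ for $\beta\le\alpha$, here one uses $\gamma=3\alpha^2-\beta^2>0$), which only pushes $y_2\big|_{x=0}$ to larger positive values and hence makes the decay stronger. For $t$ near $0$, the bound $\rho_2(t)>-L/2$ from Lemma~\ref{lemma-modulation} gives $y_2\big|_{x=0}>\gamma t+L/2\ge L/2$. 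Combining these two regimes, $y_2\big|_{x=0}\ge L/2$ for all $t\in[0,T_*]$, whence $e^{-\beta|y_2|}\le e^{-\beta L/2}$ and the claimed bound \eqref{trace1} follows with the stated constant $C$.

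The main obstacle, and the reason the confinement $\rho_2(t)\in(-\tfrac{L}{2},\tfrac{L}{2})$ was arranged in the modulation lemma, is precisely to guarantee the uniform lower bound on $y_2\big|_{x=0}$: without control of the modulated shift one could not rule out the envelope crossing the boundary, which would destroy the exponential smallness of the traces. Once that lower bound is secured, the estimate is a direct consequence of the exponential decay of $\sech$ and the uniform boundedness of the oscillatory factors; the only care needed is to check that differentiating in $x$ up to order three and in the parameters $x_1,x_2$ does not produce any growing factor, which it does not since each differentiation either lands on a bounded trigonometric factor or on $\sech/\tanh$, both of which preserve the $\sech(\beta y_2)$ decay up to multiplicative constants depending only on $\alpha,\beta$.
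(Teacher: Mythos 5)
Your proof is correct and follows essentially the same route as the paper: both rest on the pointwise bound $|\partial_j\partial_x^k\widetilde B|\lesssim\sech(\beta y_2)$ together with the confinement $\rho_2(t)\in(-\tfrac{L}{2},\tfrac{L}{2})$ and $\gamma>0$, which force $y_2\big|_{x=0}=\gamma t+\rho_2(t)+L\ge L/2$ on $[0,T_*]$. The only cosmetic difference is that the paper verifies the $\sech$ bound by an explicit computation for $k=1$ and then invokes the identities \eqref{Bt} and the PDE for $k=2,3$, whereas you argue structurally for all $k$ at once; both are valid.
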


\begin{proof}
 See Appendix \ref{Appboundary} for the proof of this result.
\end{proof}

\subsection{Error estimate}\label{sub-error-estimate}
Applying Lemma \ref{pertubation} to the solution $u\in \mathcal{C}(\R^+;H^2(\R^-))$ with homogeneous boundary conditions $u(0,t)=\partial_x u(0,t)=0$ 
and by using the smallness of $z(x,t):=u(x,t)-B(x,t)$ in \eqref{z}, we get
\begin{equation}
\begin{split}
\mathcal{H}[u](t)=&\mathcal{H}[B](t)+\frac{1}{2} \mathcal{Q}[z](t)+N[z](t)v\\
&\hspace{1.2cm}+ B_{xx}(x=0,t)z_x(x=0,t)- B_{3x}(x=0,t)z(x=0,t)\\ 
&\hspace{2.5cm}-5B^2B_{x}(x=0,t)z(x=0,t) + B_{x}(x=0,t)z(x=0,t).
\end{split}
\end{equation}

Now, by using that $u(0,t)=u_x(0,t)\equiv0$ we get 
\begin{equation}
\begin{split}
B_{xx}(x=0,t)&z_x(x=0,t)- B_{3x}(x=0,t)z(x=0,t)\\ 
&-5B^2B_{x}(x=0,t)z(x=0,t) + B_{x}(x=0,t)z(x=0,t)\\
&\hspace{1cm} \boldsymbol{=} -B_{xx}(x=0,t)B_x(x=0,t)+ B_{3x}(x=0,t)B(x=0,t)\\ 
&\hspace{2.5cm} +5B^2B_{x}(x=0,t)B(x=0,t) - B_{x}(x=0,t)B(x=0,t).
\end{split}
\end{equation}

We now fix the following notation: $\widetilde{\mathcal{H}}$ is the extension of $\mathcal{H}$ \eqref{Hlyapunov} to the whole line $\R$.

\begin{lemma}\label{error1}
	Let $B(x,t;\rho_1(t),\rho_2(t)+L)$  given by \eqref{breather}. Then the following error estimate holds
	
	\begin{equation}
	|\mathcal H[B](t)-\mathcal H[B](0)|\lesssim e^{-\beta L/2}.
	\end{equation}
	\end{lemma}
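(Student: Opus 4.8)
The plan is to transfer the problem to the whole line, where the analogous functional is \emph{exactly} conserved along breathers, and to control the difference by the ``right tail'' of the breather that the restriction to $\R^-$ discards. Write $B=B(\cdot,t;\rho_1(t),\rho_2(t)+L)$ for the modulated restricted breather and $\widetilde B=\widetilde B_{\alpha,\beta}(\cdot,t;\rho_1(t),\rho_2(t)+L)$ for its extension to $\R$. Since the densities appearing in $M,E,F$ (see \eqref{Mass}, \eqref{Energy}, \eqref{Second-Energy}) are the same whether integrated over $\R^-$ or over $\R$, for every $s\ge 0$ one has
\begin{equation}\label{error1-split}
\mathcal H[B](s)=\widetilde{\mathcal H}[\widetilde B](s)-R(s),\qquad R(s):=\widetilde{\mathcal H}[\widetilde B](s)-\mathcal H[B](s),
\end{equation}
where $R(s)$ is precisely that same combination of densities integrated over $\R^+$. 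Subtracting the values at $t$ and $0$ gives
\begin{equation}\label{error1-reduce}
\mathcal H[B](t)-\mathcal H[B](0)=\big(\widetilde{\mathcal H}[\widetilde B](t)-\widetilde{\mathcal H}[\widetilde B](0)\big)+R(0)-R(t),
\end{equation}
so it suffices to show that the first parenthesis vanishes and that $|R(s)|\lesssim e^{-\beta L}$ uniformly in $s\ge 0$.

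For the first parenthesis I would argue that $\widetilde{\mathcal H}[\widetilde B]$ depends neither on $t$ nor on the shift parameters. Each of $M,E,F$ taken over $\R$ is a conserved quantity of \eqref{mkdv} evaluated on the exact breather, hence constant in $t$. Moreover, writing $\widetilde B=\Phi(y_1,y_2)$ as a fixed profile of $y_1=x+\delta t+x_1$ and $y_2=x+\gamma t+x_2$, and substituting $u=y_2$ in the $x$-integral (along which $y_1-y_2=(\delta-\gamma)t+x_1-x_2=:d$ is constant), every such whole-line integral is seen to depend on $(t,x_1,x_2)$ only through $d$. Being constant in $t$ while $\delta\neq\gamma$ (see \eqref{deltagamma}), it must be constant in $d$ as well, i.e.\ a number depending only on $\alpha,\beta$. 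Consequently $\widetilde{\mathcal H}[\widetilde B](t)$ and $\widetilde{\mathcal H}[\widetilde B](0)$, evaluated at the parameters $(\rho_1(\cdot),\rho_2(\cdot)+L)$ of the two respective times, coincide, and the first parenthesis in \eqref{error1-reduce} is zero.

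It remains to estimate the tail $R(s)$. By Lemma \ref{lemma-modulation} we have $\rho_2(s)\in(-\tfrac L2,\tfrac L2)$, and since $\beta\le\alpha$ forces $\gamma=3\alpha^2-\beta^2>0$ (leftward motion), for $x\in\R^+$ and $s\ge 0$ the localization variable satisfies
\[
y_2=x+\gamma s+\rho_2(s)+L>\gamma s+\tfrac L2\ge\tfrac L2>0.
\]
By the explicit formulas \eqref{breather}, \eqref{breather-primitive} the breather and its $x$-derivatives up to second order obey $|\partial_x^k\widetilde B|\le Ce^{-\beta y_2}$ on $\{y_2\ge0\}$ for $k=0,1,2$ (the same exponential decay that underlies Corollary \ref{boundary}). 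Since every monomial in the density of $R$ has total degree at least two in $(\widetilde B,\widetilde B_x,\widetilde B_{xx})$, each is bounded by $Ce^{-2\beta y_2}$ on $\R^+$, whence
\[
|R(s)|\lesssim\int_0^\infty e^{-2\beta(x+\gamma s+\rho_2(s)+L)}\,dx\le\frac{1}{2\beta}\,e^{-2\beta(\gamma s+\rho_2(s)+L)}\le\frac{1}{2\beta}\,e^{-\beta L},
\]
using $\gamma s\ge0$ and $\rho_2(s)>-\tfrac L2$ in the last inequality. Feeding $R(0)$ and $R(t)$ into \eqref{error1-reduce} yields $|\mathcal H[B](t)-\mathcal H[B](0)|\le|R(0)|+|R(t)|\lesssim e^{-\beta L}\le e^{-\beta L/2}$, as claimed.

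The main obstacle, and the point that most needs care, is the vanishing of the whole-line difference: because the modulation parameters $\rho_1,\rho_2$ are time dependent, $\widetilde B(\cdot,t;\rho_1(t),\rho_2(t)+L)$ is \emph{not} an exact solution of \eqref{mkdv}, so one cannot simply invoke conservation of $\widetilde{\mathcal H}$ along the flow; the resolution is the parameter-independence argument above, which crucially uses $\delta\neq\gamma$. The tail estimate is then routine once $\rho_2>-\tfrac L2$ from the modulation guarantees that the breather core stays at distance at least $\tfrac L2$ to the left of the origin.
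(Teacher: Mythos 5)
Your proof is correct and follows essentially the same route as the paper's (two-line) argument: the whole-line functional $\widetilde{\mathcal H}[\widetilde B]$ is constant, and the discrepancy $\mathcal H[B]-\widetilde{\mathcal H}[\widetilde B]$ is the tail integral over $\R^+$, which is $O(e^{-\beta L})$ thanks to $\rho_2>-\tfrac L2$ and $\gamma>0$. Your write-up is in fact more careful than the paper's on one point it glosses over --- that the modulated breather is not an exact solution, so constancy of $\widetilde{\mathcal H}[\widetilde B]$ must come from parameter-independence (your $\delta\neq\gamma$ argument) rather than from conservation along the flow.
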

\begin{proof}
As introduced above, we have that $\widetilde{\mathcal {H}}[\widetilde B](t)=\widetilde{\mathcal {H}}[\widetilde B](0)$. By using the localization on  the left size of the breather, far away from the origin, we get the result.
\end{proof}

Now we continue with the proof. By using Lemmas \ref{pertubation} and \ref{error1} and Corollary  \ref{boundary}, we get for $t\le T_{*}$ that
\begin{equation}\label{quad1}
\begin{split}
\mathcal Q[z](t)&\leq C \mathcal Q[z](0)+K\|z(t)\|_{H^2(\R^-)}^3+K\|z(0)\|_{H^2(\R^-)}^3+Ke^{-\beta L/2}\\
&\leq C \|z(0)\|_{H^2(\R^-)}^2+C\|z(t)\|_{H^2(\R^-)}^3+Ke^{-\beta L/2}\\
&\leq C \eta^2+CK_0^3 (\eta +e^{-{\beta} L})^3+Ce^{-\beta L/2},
\end{split}
\end{equation}
where the term $\|z(t)\|_{H^2(\R^-)}^3$ was absorbed by $\|z(t)\|_{H^2(\R^-)}^2$.
\subsection{End of the proof of Theorem \ref{teorema}}\label{sub-end-proof}

The final step in the proof of Theorem \ref{teorema} consists of making a suitable extension of the required functions and functionals to the whole line.

\begin{defn} (Zero extension, left half-line case). Let $v \in H^{2}\left(\mathbb{R}^{-}\right)$ such that $v(x=0)=0$ and $v_x(x=0)=0$. 
We define its (zero) extension $\tilde{v}$ as the function
\begin{equation}\label{zero-extension}
\tilde{v}(x):=\left\{\begin{array}{ll}
v(x), & x \leq 0, \medskip \\
0,    & x>0.
\end{array}\right.
\end{equation}
\end{defn}

Note that $\widetilde B$
cannot be considered as the zero extension of $B$, since this function and its derivative does not vanish at the origin. Therefore we consider the 
natural extension of the breather $\widetilde B$ as given in \eqref{breather}. This interesting
difference will be important for the stability proof.

\medskip
 Let $\tilde u$ the extension of the solution $u$ defined in \eqref{zero-extension} and  consider the function \newline $\tilde z\in \mathcal{C}([0,\infty); H^2(\R))$ by
\begin{equation}\label{tildez}
\tilde z=\tilde u-\widetilde B.
\end{equation}
\noindent
Then, we write
\[\mathcal E[\tilde z] = \widetilde{\mathcal{ Q}}[\tilde z(t)]- \mathcal{Q}[z(t)],\]
\noindent
where $\mathcal E$ is the quadratic error functional restricted to $\R^+$, namely
\begin{equation}
\begin{split}
\mathcal{E}[\tilde z]:=&\int_{\mathbb{R}^+} z \mathcal{L}[z]= \int_{\mathbb{R}^+} z_{x x}^{2}+2\left(\beta^{2}-\alpha^{2}\right) \int_{\mathbb{R}^+} z_{x}^{2}+\left(\alpha^{2}+\beta^{2}\right)^{2} \int_{\mathbb{R}^+} z^{2}-5 \int_{\mathbb{R}^+} B^{2} z_{x}^{2} \\
&+5 \int_{\mathbb{R}^+} B_{x}^{2} z^{2}+10 \int_{\mathbb{R}^+} B B_{x x} z^{2}+\frac{15}{2} {\int_{\mathbb{R}^+}}B^{4} z^{2}-6\left(\beta^{2}-\alpha^{2}\right) \int_{\mathbb{R}^+} B^{2} z^{2}.
\end{split}
\end{equation}

\medskip 
From the above definitions, we have the following result on the error control.

\begin{lemma}\label{error} Let $\tilde z$ given by \eqref{tildez}. Then for any $t>0$ 
$$\mathcal E[\tilde z]\lesssim 	 e^{-\beta L/2}.$$
\end{lemma}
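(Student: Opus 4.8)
The plan is to exploit the fact that on the right half-line the extended solution vanishes identically, so that the error functional collapses to an integral of the breather alone, which is localized far from the origin. First I would observe that, by the definition of the zero extension \eqref{zero-extension}, one has $\tilde u \equiv 0$ on $\R^+$, while $\widetilde B$ is the \emph{natural} (non-vanishing) extension of $B$; hence the difference $\tilde z = \tilde u - \widetilde B$ equals $-\widetilde B$ on $\R^+$. Substituting $\tilde z = -\widetilde B$ (so that $\tilde z_x = -\widetilde B_x$ and $\tilde z_{xx} = -\widetilde B_{xx}$) into each term of $\mathcal E[\tilde z]$, every summand of the integrand becomes a monomial in $\widetilde B, \widetilde B_x, \widetilde B_{xx}$ of degree at least two; for instance $z_{xx}^2 \mapsto \widetilde B_{xx}^2$, $B^2 z_x^2 \mapsto \widetilde B^2 \widetilde B_x^2$, and $B^4 z^2 \mapsto \widetilde B^6$. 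Thus $\mathcal E[\tilde z]$ is a finite sum of integrals over $\R^+$ of products of at least two factors drawn from $\{\widetilde B, \widetilde B_x, \widetilde B_{xx}\}$.

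Next I would establish pointwise exponential decay of the breather and its $x$-derivatives on $\R^+$, uniformly in time. Recalling that $y_2 = x + \gamma t + \rho_2(t) + L$ with $\gamma = 3\alpha^2 - \beta^2 > 0$ (since $\beta \le \alpha$) and $\rho_2(t) \in (-\tfrac{L}{2}, \tfrac{L}{2})$ by Lemma \ref{lemma-modulation}, we obtain for every $x \ge 0$ and $t \ge 0$ the lower bound $y_2 \ge x + \gamma t + \tfrac{L}{2} \ge x + \tfrac{L}{2}$. Because $\widetilde B$ and each of its $x$-derivatives carry a $\sech(\beta y_2)$ factor in their explicit expression \eqref{breather}, this yields $|\partial_x^k \widetilde B(x,t)| \lesssim \sech(\beta y_2) \lesssim e^{-\beta(x + L/2)}$ for $k = 0,1,2$, by the same localization argument underlying Corollary \ref{boundary} (see Appendix \ref{Appboundary}). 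This is precisely the estimate that upgrades the single-point bound \eqref{trace1} to a bound valid on all of $\R^+$.

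Finally I would insert these decay estimates into the reduced expression for $\mathcal E[\tilde z]$. Since each monomial has degree at least two, each integrand is controlled by $e^{-2\beta(x+L/2)} \le e^{-\beta L} e^{-2\beta x}$, with the higher-degree monomials decaying even faster, so that
\[
\mathcal E[\tilde z] \lesssim e^{-\beta L} \int_0^{\infty} e^{-2\beta x}\, dx \lesssim e^{-\beta L} \le e^{-\beta L/2}
\]
for $L$ large, which is the claim (indeed with the stronger rate $e^{-\beta L}$). The main obstacle is the second step: one must verify that the breather stays uniformly far from the origin for \emph{all} positive times, and this is exactly where the leftward-motion hypothesis $\beta \le \alpha$ and the modulation bound $\rho_2(t) \in (-\tfrac{L}{2}, \tfrac{L}{2})$ are used, through the estimate $y_2 \ge x + \tfrac{L}{2}$. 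Once this uniform-in-time decay is secured, the remaining pointwise bounds and the integration are routine.
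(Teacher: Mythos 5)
Your proof is correct and follows essentially the same route as the paper: the paper's one-line argument ``follows from $\|\tilde z(t)\|_{H^{2}(\mathbb{R}^{+})}\lesssim e^{-\beta L/2}$'' is exactly the fact you establish in detail, namely that $\tilde z=-\widetilde B$ on $\mathbb{R}^{+}$ and that the uniform-in-time bound $y_{2}\geq x+\tfrac{L}{2}$ (from $\gamma>0$ and $\rho_{2}(t)\in(-\tfrac{L}{2},\tfrac{L}{2})$, as in Appendix~\ref{Appboundary}) gives exponential smallness of $\widetilde B$ and its derivatives on $\mathbb{R}^{+}$, whence the quadratic functional is $O(e^{-\beta L})$. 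No discrepancies to report.
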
	
\begin{proof}
Follows directly from the fact 	that
$\|\tilde z(t)\|_{H^2(\R^+)}\lesssim e^{-\beta L/2}$.
\end{proof}

Now we are able to treat the term $\mathcal Q[z(t)]$. To proceed, we use Proposition \ref{coec} and Lemma \ref{error} to get
\begin{equation}\label{quad}
\begin{split}
\mathcal Q[ z (t)]&=\widetilde{\mathcal Q}[\tilde z (t)]-\mathcal E[\tilde z]\geq \mu_0\|\tilde z\|_{H^2(\R)}^2-\frac{1}{\mu_{0}}\left(\int_{\mathbb{R}} \tilde z \tilde B\right)^{2}-Ce^{-\beta L/2}\\
&=\mu_0\|z(t)\|_{H^2(\R^+)}^2+\|z(t)\|_{H^2(\R^-)}^2-\frac{1}{\mu_{0}}\left(\int_{\R^-}\tilde z \tilde B\right)^{2}-Ce^{-\beta L/2}\\
&\geq \mu_0\|z(t)\|_{H^2(\R^-)}^2-\frac{1}{\mu_{0}}\left(\int_{\R^-}\tilde z \tilde B\right)^{2}-Ce^{-\beta L/2}.
\end{split}
\end{equation}
Now by using the conservation of the mass \eqref{mass-ident} we get
\begin{equation}
\begin{split}
\|u(t)\|_{L^2(\R^-)}^2&=\|(B+z)(t)\|_{L^2(\R^-)}^2=\|B(t)\|_{L^2(\R^-)}^2+\|z(t)\|_{L^2(\R^-)}^2+2\int_{\R^-}B(t)z(t)dx\\
&=\|B(0)\|_{L^2(\R^-)}^2+\|z(0)\|_{L^2(\R^-)}^2 + 2\int_{\R^-}B(0)z(0)dx=\|u_0\|_{L^2(\R^-)}.
\end{split}
\end{equation}
It follows that
\begin{equation}\label{tec4}
\begin{split}
\left|\int_{\R^-}B(t)z(t)dx\right|&\leq C\left|\int_{\R^-}B(0)z(0)dx\right| + C\|z(t)\|_{L^2(\R^-)}^2 + \|z(0)\|_{L^2(\R^-)}^2 + Ce^{-\beta L/2}\\
&\leq C \left(\eta+K_0^2 (\eta +e^{-{\beta} L/2})^2+\eta^2\right)+Ce^{-\beta L/2},
\end{split}
\end{equation}
for any $t\in [0,T_{*}]$. Replacing \eqref{tec4} in \eqref{quad1} we get 
\begin{equation*}
\mu_0\|z(t)\|_{H^2(\R^-)}^2\leq \frac{C}{\mu_0} \left(\eta+K_0 (\eta +e^{-{\beta} L/2})+\eta^2\right)^2+C \eta^2+C(K_0 (\eta +e^{-{\beta} L/2})^3+Ce^{-\beta L/2}.
\end{equation*}

Taking $K_0$ large  enough and $\eta$ small enough, we finally get 
\begin{equation}
\|z(t)\|_{H^2(\R^-)}^2\leq\frac12 K_0 (\eta +e^{-{\beta} L})^2
\end{equation}
as we stated in \eqref{property1}. Then, the  proof is finished.

\appendix
\section{Proof of Lemma \ref{lemma}}\label{proof-lemma}

	Multiplying the equation of the IBVP \eqref{IBVP_left} and integrating by parts one get 
\begin{equation}\label{proof-M-01}
	\begin{split}
		\frac{1}{2}\frac{d}{dt}\int_{\R^-}|u|^2dx  & = \int_{\R^-}u_{xx}u_xdx -[uu_{xx}](0,t) -3\int_{\R^-}u^3u_xdx\\
		&=\big[\tfrac{1}{2}u^2_x -u_{xx}u - \tfrac{3}{4}u^4\big](0,t)\\
		&=\tau_M(0,t),
	\end{split}
\end{equation}
which implies \eqref{mass-ident}  by integration in time of \eqref{proof-M-01}. Under hypotheses in \eqref{lemma-h-condit} we see that $\tau_M(0,t)=0$ and then we have \eqref{mass-ident-h}.

\medskip 
Now we derive with respect to $x$ the equation of the IBVP \eqref{IBVP_left} and after that we multiply by $u_x$ to get 
\begin{equation}\label{proof-E-01}
	\begin{split}
		\frac{1}{2}\frac{d}{dt}\int_{\R^-}u^2_xdx & = -\int_{\R^-}\big(u_{xxx} + (u^3)_x\big)_xu_xdx\\ 
		&= \int_{\R^-}u_{xxx}u_{xx}dx  + \int_{\R^-}(u^3)_xu_{xx}dx - \big[u_{xxx}u_x + 3u^2u^2_x\big](0, t)\\
		&=\int_{\R^-}u^3\big( u_t + (u^3)_x\big)dx + \big[ \tfrac{1}{2}u^2_{xx} + u^3u_{xx} -u_{xxx}u_x - 3u^2u^2_x\big](0, t)\\
		&=\frac{1}{4}\frac{d}{dt}\int_{-\infty}^0u^4dx + \big[ \tfrac{1}{2}u^6 + \tfrac{1}{2}u^2_{xx} + u^3u_{xx} -u_{xxx}u_x - 3u^2u^2_x\big](0, t). 
	\end{split}
\end{equation}
Thus,
\begin{equation}\label{proof-E-02}
	\frac{d}{dt}E[u](t)=\tau_E(0, t)
\end{equation}
which give us \eqref{energy-ident}. In the case of the homogeneous boundary condition \eqref{lemma-h-condit},  we see that 
$$\tau_E(0, t)=\tfrac{1}{2}u^2_{xx}$$
\noindent
and hence  the lower bound for the energy \eqref{energy-ident-h} is obtained.

\medskip

Now we are going to prove the identity \eqref{second-energy-ident}. We compute separately, the derivative in time for 
the integral terms $\displaystyle \int_{\R^-}u_{xx}^2dx$\,  and\,  $\displaystyle \int_{\R^-}u^6dx$. 
By using the structure of the mKdV equation  and integration by parts we get:
\begin{equation}\label{proof-F-01}
	\begin{split}
		\frac{1}{2}\frac{d}{dt}\int_{\R^-}&u_{xx}^2 dx = \int_{\R^-}u_{xx}u_{xxt}dx =-\int_{\R^-}u_{xxx}u_{xt}dx + [u_{xx}u_{xt}](0,t)\\
		&=\int_{\R^-}u_{xxx}\big[ u_{xxx} + (u^3)_x\big]_x dx + [u_{xx}u_{xt}](0,t)\\
		&= \frac{1}{2}u_{xxx}^2(0,t) + \int_{\R^-}u_{xxx}(u^3)_{xx}dx  + [u_{xx}u_{xt}](0,t)\\
		&=-\int_{\R^-}u_t(u^3)_{xx}dx - \int_{\R^-}(u^3)_x(u^3)_{xx}dx + \frac{1}{2}u_{xxx}^2(0,t) + [u_{xx}u_{xt}](0,t)\\
		&=\frac{3}{2}\int_{\R^-}u^2\big(u^2_x\big)_tdx + \Big[-u_t(u^3)_x - \frac{1}{2}\big((u^3)_x\big)^2 + \frac{1}{2}u_{xxx}^2 + u_{xx}u_{xt}\Big](0,t).
	\end{split}
\end{equation}

\medskip
On the other hand we have 
\begin{equation}\label{proof-F-2}
	\begin{split}
		\frac{1}{4}\frac{d}{dt}\int_{\R^-}u^6dx&= -\frac{3}{2}\int_{\R^-}u^5\big[u_{xxx}+ (u^3)_x\big]dx\\
		&=\frac{3}{2}\int_{\R^-}(u^5)_xu_{xx}dx -\frac{3}{2}u^5u_{xx}(0,t)-\frac{9}{2}\int_{-\infty}^0u^7u_xdx\\
		&=\underbrace{\frac{3}{2}\int_{\R^-}(u^5)_xu_{xx}dx}_{\boldsymbol{I}}-\frac{3}{2}[u^5u_{xx}](0,t) -\frac{9}{16}u^8(0,t)
	\end{split}
\end{equation}
with
\begin{equation*}
	\begin{split}
		\boldsymbol{I}&= -\frac{3}{2}\int_{\R^-}\big[u_t +u_{xxx}\big]u^2u_{xx}dx + \frac{3}{2}\int_{\R^-}u^3 (u^2)_xu_{xx}dx\\
		&= -\frac{3}{2}\int_{\R^-}u_tu^2u_{xx}dx-\frac{3}{2}\int_{\R^-}u_{xxx}u^2u_{xx}dx + \frac{2}{5}\,\boldsymbol{I}\\
		&= \frac{3}{2}\int_{\R^-}(u_tu^2)_xu_xdx -\frac{3}{2}[u_tu^2u_x](0,t) -\frac{3}{4}\int_{\R^-}u^2(u_{xx}^2)_xdx + \frac{2}{5}\,\boldsymbol{I};
	\end{split}
\end{equation*}
so
\begin{equation}\label{proof-F-3}
	\begin{split}
		\boldsymbol{I}&= \frac{5}{4}\int_{\R^-}u^2(u_x^2)_tdx + \frac{5}{2}\int_{\R^-}(u^2)_tu_x^2dx -\frac{5}{4}\int_{\R^-}u^2(u_{xx}^2)_xdx -\frac{5}{2}[u_tu^2u_x](0,t)\\
		&=\frac{5}{4}\int_{\R^-}u^2(u_x^2)_tdx + \frac{5}{2}\int_{\R^-}(u^2)_tu_x^2dx +\underbrace{\frac{5}{2}\int_{\R^-}uu_xu^2_{xx}dx}_{\boldsymbol{J}}\\
		&\hspace{7cm} - \frac{5}{4}[u^2u^2_{xx}](0,t) -\frac{5}{2}[u_tu^2u_x](0,t).
	\end{split}
\end{equation}
The integral $\boldsymbol{J}$ can be compute as follows:
\begin{equation*}
	\begin{split}
		\boldsymbol{J} &= \frac{5}{2}\int_{\R^-}uu_xu_{xx}u_{xx}dx\\
		&=-\frac{5}{2}\int_{\R^-}u_x^3u_{xx}dx - \boldsymbol{J} -\frac{5}{2}\int_{\R^-}uu_x^2u_{xxx}dx + \frac{5}{2}[uu_x^2u_{xx}](0,t);
	\end{split}
\end{equation*}	
thus,
\begin{equation*}
	\begin{split}
		\boldsymbol{J} & =  -\frac{5}{16}u_x^4(0,t) -\frac{5}{4}\int_{\R^-}uu_x^2u_{xxx}dx  + \frac{5}{4}[uu_x^2u_{xx}](0,t)\\
		&= \frac{5}{8}\int_{\R^-}(u^2)_tu_x^2dx + \frac{15}{4}\int_{\R^-}u^3u^3_xdx -\frac{5}{16}u_x^4(0,t) + \frac{5}{4}[uu_x^2u_{xx}](0,t).
	\end{split}
\end{equation*}	
It is not difficult to check that 
$$\int_{\R^-}u^3u^3_xdx= -\frac{1}{15}\boldsymbol{I} +  \frac{1}{4}[u^4u_x^2](0,t),$$
then we have 
\begin{equation*}
	\boldsymbol{J}=	\frac{5}{8}\int_{\R^-}(u^2)_tu_x^2dx -\frac{1}{4}\boldsymbol{I} + \frac{15}{16}[u^4u_x^2](0,t) -\frac{5}{16}u_x^4(0,t) + \frac{5}{4}[uu_x^2u_{xx}](0,t)
\end{equation*}
and putting the expression of $\boldsymbol{J}$ in \eqref{proof-F-3} one get
\begin{multline*}
	\boldsymbol{I}= \int_{\R^-}u^2(u_x^2)_tdx + \frac{5}{2}\int_{\R^-}(u^2)_tu_x^2dx\\
	+ \Big[-u^2u^2_{xx} -2u_tu^2u_x+ \frac{3}{4}u^4u_x^2 -\frac{1}{4}u_x^4 + uu_x^2u_{xx}\Big](0,t).
\end{multline*}
Now, by using the expression obtained for $\boldsymbol{I}$ and adding \eqref{proof-F-01} with \eqref{proof-F-2}  we get 
\begin{equation}\label{proof-F-4}
	\frac{1}{2}\frac{d}{dt}\int_{\R^-}u_{xx}^2dx +\frac{1}{4}\frac{d}{dt}\int_{\R^-}u^6dx=\frac{5}{2}\frac{d}{dt}\int_{\R^-}u^2u_x^2dx + \tau_F(0,t),
\end{equation}
with $\tau_F$ defined in \eqref{trace-terms-F}. 

\medskip 
Finally \eqref{second-energy-ident} is obtained by integrating  \eqref{proof-F-4} and under the homogeneous condition \eqref{lemma-h-condit} we have  
$\tau_F(0, s)=0$ that yields \eqref{second-energy-ident-h}.

\section{Proof of Corollary \ref{boundary}}\label{Appboundary}
The proof of \eqref{trace1} in the  case $k=0$ follows directly from \eqref{semibreather}  and the fact  that 
		\[\gamma t+\rho_2+L>\tfrac{L}{2}\quad \forall\ t\in [0,T_*],\]
	which is a consequence of $\rho_2(t)\in (-\frac{L}{2},\frac{L}{2})$ obtained in Lemma \ref{lemma-modulation}, and the hypothesis $\gamma>0$. Now we prove \eqref{trace1}. 
	By direct computation, we obtain
	\begin{equation}\label{tec0}
		\partial_x	B(x,t;\rho_1(t),\rho_2(t))=\frac{4 \sqrt{2} \alpha \beta h_{1}(x,t)}{\left(\alpha^{2}+\beta^{2}+\alpha^{2} \cosh \left(2 \beta y_{2}\right)-\beta^{2} \cos \left(2 \alpha y_{1}\right)\right)^{2}}
	\end{equation}
	where
	$$
	\begin{array}{l}
		h_{1}(x,t):=-\left(\alpha^{2}+\beta^{2}\right) \cosh \left(\beta y_{2}\right) \sin \left(\alpha y_{1}\right)\left[\alpha^{2}+\beta^{2}+\alpha^{2} \cosh \left(2 \beta y_{2}\right)-\beta^{2} \cos \left(2 \alpha y_{1}\right)\right] \\
		-2 \alpha \beta\left[\alpha \cos \left(\alpha y_{1}\right) \cosh \left(\beta y_{2}\right)-\beta \sin \left(\alpha y_{1}\right) \sinh \left(\beta y_{2}\right)\right]\left[\beta \sin \left(2 \alpha y_{1}\right)+\alpha \sinh \left(2 \beta y_{2}\right)\right]
	\end{array},
	$$ 
	where
	$y_1 := x+ \rho_1(t) + x_1, \  y_2 : = x+ \rho_2 (t) + x_2$ and $	\delta := \al^2 -3\bt^2, \quad  \ga := 3\al^2 -\bt^2$.
	Now, we control the function $h_1$ as follows
	\begin{equation}\label{tec1}
		\begin{split}
			|h_1(x,t)|&\leq C_{\alpha,\beta} \cosh (\beta y_2) [1+\cosh(2\beta y_2)]\\
			&\quad+C_{\alpha,\beta}[\cosh (\beta y_2)+1+|\sinh(\beta y_2)|][(1+|\sinh(2\beta y_2)|)]\\
			&\leq \cosh^3(\beta y_2)).
		\end{split}
	\end{equation}
	We also have that
	\begin{equation}\label{tec2}
		\frac{1}{\left(\alpha^{2}+\beta^{2}+\alpha^{2} \cosh \left(2 \beta y_{2}\right)-\beta^{2} \cos \left(2 \alpha y_{1}\right)\right)^{2}}\leq \frac{K_{\alpha,\beta}}{\cosh(2 \beta y_2)^2}.
	\end{equation}
	
	\noindent
	Thus, by using \eqref{tec1}, \eqref{tec2} in \eqref{tec0}
	\begin{equation}\label{tec3}
		|\partial_x	B(x,t;\rho_1(t),\rho_2(t))|\leq C_{\alpha,\beta}\sech \left(\beta y_{2}\right).
	\end{equation}
	Finally, combining \eqref{tec3} and the fact $\rho_2(t)\in (-\frac{L}{2},\frac{L}{2})$ we get the case $k=1$. The case $k=2$ follows from \eqref{Bt}. The case $k=3$ just follows from the PDE \eqref{mkdv}.

\end{document}